\documentclass[11pt,a4paper]{amsart}

\usepackage{amsmath}
\usepackage{amsthm}
\usepackage{enumerate} 
\usepackage{amssymb}
\usepackage{graphicx}
\usepackage[all]{xy}
\usepackage{hyperref}
\usepackage{aliascnt}
\usepackage{stmaryrd} 
\usepackage{mathtools}
\usepackage{verbatim} 
\usepackage{txfonts} 
\usepackage{lmodern}
\usepackage{setspace}   

\usepackage[hmargin=2.9cm]{geometry}
\newtheorem{lma}{Lemma}[section]

\newaliascnt{thmCt}{lma}
\newtheorem{thm}[thmCt]{Theorem}
\aliascntresetthe{thmCt}

\newaliascnt{corCt}{lma}
\newtheorem{cor}[corCt]{Corollary}
\aliascntresetthe{corCt}

\newaliascnt{propCt}{lma}
\newtheorem{prop}[propCt]{Proposition}
\aliascntresetthe{propCt}

\newtheorem*{thm*}{Theorem}
\newtheorem*{cor*}{Corollary}
\newtheorem*{prop*}{Proposition}

\theoremstyle{definition}

\newaliascnt{prgCt}{lma}
\newtheorem{prg}[prgCt]{}
\aliascntresetthe{prgCt}

\newtheorem*{prg*}{}

\newaliascnt{dfnCt}{lma}
\newtheorem{dfn}[dfnCt]{Definition}
\aliascntresetthe{dfnCt}

\newaliascnt{rmkCt}{lma}
\newtheorem{rmk}[rmkCt]{Remark}
\aliascntresetthe{rmkCt}

\newaliascnt{rmksCt}{lma}

\aliascntresetthe{rmksCt}

\newaliascnt{ntnCt}{lma}

\aliascntresetthe{ntnCt}

\newaliascnt{ntnsCt}{lma}

\aliascntresetthe{ntnsCt}

\newaliascnt{qstCt}{lma}

\aliascntresetthe{qstCt}

\newaliascnt{qstsCt}{lma}

\aliascntresetthe{qstsCt}

\newaliascnt{prblCt}{lma}

\aliascntresetthe{prblCt}

\newaliascnt{obsCt}{lma}

\aliascntresetthe{obsCt}

\newaliascnt{obssCt}{lma}

\aliascntresetthe{obssCt}

\newaliascnt{exaCt}{lma}

\aliascntresetthe{exaCt}

\newaliascnt{exasCt}{lma}

\aliascntresetthe{exasCt}

\newcommand{\CC}{\mathcal{C}}

\newcommand{\N}{\mathbb{N}}

\newcommand{\R}{\mathbb{R}}

\DeclareMathOperator{\EE}{E}

\DeclareMathOperator{\FF}{F}

\DeclareMathOperator{\LL}{L}
\DeclareMathOperator{\TT}{T}

\DeclareMathOperator{\Ped}{Ped} 
\DeclareMathOperator{\Prim}{Prim}

\newcommand{\Ca}{C^*}

\DeclareMathOperator{\Aff}{Aff}

\DeclareMathOperator{\Lat}{Lat}

\DeclareMathOperator{\Cu}{Cu}

\DeclareMathOperator{\AH}{AH}

\DeclareMathOperator{\AF}{AF}
\DeclareMathOperator{\Lsc}{Lsc}

\DeclareMathOperator{\ev}{ev}

\DeclareMathOperator{\id}{id}

\newcommand{\pprec}{\prec\hspace{-0,15cm}\prec}

\DeclareMathAlphabet{\mymathbb}{U}{bbold}{m}{n}

\begin{document}
\onehalfspacing
\title{Tracially reflexive $\Ca$-algebras}
\author{Laurent Cantier}

\address{Laurent Cantier\newline
Departamento de Matem\'aticas\\
Universidad de Zaragoza\\
C/Pedro Cerbuna 12\\
50009 Zaragoza\\
Spain}
\email[]{lncantier@gmail.com}

\thanks{The author was supported by the Spanish State Research Agency through Consolidaci\'on Investigadora program No. CNS2022-135340 and partially supported by MINECO (grants No. PID2023-147110NB-I00).}
\keywords{}

\begin{abstract} Motivated by a question of L. Robert, asking whether $\LL(\TT(A))=\Lsc_\mathcal{C}(\TT(A))$ for any separable $\Ca$-algebra $A$, we introduce and initiate the study of \emph{tracially reflexive $\Ca$-algebras}. We first prove that commutative $\Ca$-algebras are tracially reflexive. We also prove that tracial reflexiveness satisfies permanence properties, such as being preserved under inductive limits. Subsequently, we expose two criteria for tracial reflexiveness, using the Cuntz semigroup and a weak version of the Schr\"{o}der-Simpson theorem, respectively. In particular, separable topological dimension zero $\Ca$-algebras are tracially reflexive. We end the manuscript by closing remarks that could lead to further lines of investigation involving tracial reflexiveness.
\end{abstract}

\maketitle
\section{Introduction}
The data encoded in tracial states of $\Ca$-algebras has been proven crucial to understand stably finite $\Ca$-algebras. It is no surprise that this information is part of the Elliott invariant and has played a central role in achieving the classification of (unital) separable, simple, $\mathcal{Z}$-stable $\Ca$-algebras satisfying the Universal Coefficient Theorem. See \cite{EGLN21} and \cite{GLN1,GLN2}. See also \cite{W18} for a general overview, and \cite{CGSTW21} for a remarkably detailed and innovative exposition on the matter. Indeed, these $\Ca$-algebras lie in stably finite ones. The study of \emph{dimension functions} and the \emph{Cuntz semigroup}, done in the pioneer works of Cuntz and Pedersen, proved the existence of tracial states for such $\Ca$-algebras and their strong link with equivalence classes of self-adjoint elements. See \cite{C78} and \cite{CP79}. To this day, the understanding of these tracial states remains key to the field, and hence, continues to be the focus of many works. We highlight a state-of-the art work contained in \cite{GM26}.

For more general $\Ca$-algebras, such as non-unital or non-simple, we know that tracial states information may be unsatisfactory, as their existence might be compromised. It is henceforth useful to look at more general objects, such as -unbounded- \emph{lower-semicontinuous traces}. The information contained in these maps can be hard to understand, especially in the non-simple case. The works of Robert, together with Elliott and Santiago, tend to generalize known facts about tracial states, in the broader context of traces. See \cite{ERS11, R09, R13}. They point out that the \emph{functionals} on the Cuntz semigroup of a $\Ca$-algebra encode the information about its traces. The set of these functionals is often seen as a \emph{dual} of the Cuntz semigroup. Naturally, the notion of \emph{double dual} has arised in the process. While it was unclear whether the canonical candidate would form a $\Cu$-semigroup, Robert has looked at a more adequate version, belonging to the category of $\Cu$-semigroups. He raises the question when these two notions coincide. See \cite[Question 1]{R09}. 
The main motivation of this manuscript lies in exploring $\Ca$-algebras having a positive answer to Robert's question, that we term \emph{tracially reflexive $\Ca$-algebras}. 

A starting point was found in the work \cite{K17} of Keimel, (re)making an explicit bridge between the Cuntz semigroup and the traces of a $\Ca$-algebra, via \emph{domain theory}. We mention that domain theory is a branch of computer science that studies order-theoretical topologies on monoids and (continuous) lattices, where the study of abstract $\Cu$-semigroups naturally fits. In the last part of \cite{K17}, Keimel highlights that traces on a $\Ca$-algebra can be seen as a domain-theoretical \emph{dual} of the monoid of positive elements equipped with an order-theoretic topology. Again, a notion of \emph{double dual} arises, and it is asked whether it always coincides with the \emph{round-ideal completion} of the (topological) monoid of positive elements. See \cite[Question 4.4]{K17}. With a bit of work, bridging $\Ca$-algebras and domain theory, it can be seen that the respective questions, independently raised by Robert and Keimel, are in fact equivalent. This equivalence allows us to use Keimel's work to answer Robert's question for commutative $\Ca$-algebras. We also obtain nice permanence properties.

\begin{thm*} Tracial reflexiveness is preserved under finite direct sums, passage to matrices, stabilization and inductive limits.
\end{thm*}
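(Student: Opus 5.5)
The plan is to work throughout at the level of Cuntz semigroups. Tracial reflexiveness of $A$ says that the canonical inclusion $\LL(\TT(A))\subseteq \Lsc_\CC(\TT(A))$ is an equality. Here $\TT(A)$ is the cone of lower-semicontinuous traces, identified with the functionals $\FF(\Cu(A))$. $\LL(\TT(A))$ is Robert's $\Cu$-semigroup generated by suprema of rapidly increasing sequences of continuous affine functions. $\Lsc_\CC$ denotes the lower-semicontinuous linear functions, i.e.\ the ``double dual''. Both sides depend only on the cone $\TT(A)=\FF(\Cu(A))$. So the first step is to reduce every permanence statement to a statement about how $\FF(\Cu(\cdot))$ behaves under the construction, and about how $\LL$ and $\Lsc_\CC$ respond to that.

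\textbf{Matrices and stabilization.} We have $\Cu(M_n(A))\cong\Cu(A)\cong\Cu(A\otimes\mathcal K)$ naturally, so $\TT(M_n(A))\cong\TT(A)\cong\TT(A\otimes\mathcal K)$ as compact Hausdorff cones, with the trace $\tau$ corresponding to $\tau\otimes\mathrm{Tr}$. Hence both function spaces are unchanged and the equality transfers verbatim.

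\textbf{Finite direct sums.} Here $\Cu(A\oplus B)\cong\Cu(A)\oplus\Cu(B)$, so $\TT(A\oplus B)\cong\TT(A)\times\TT(B)$ as topological cones. A linear lower-semicontinuous function on a product splits as $f(\tau,\sigma)=f(\tau,0)+f(0,\sigma)$, with each summand lower semicontinuous. Rapidly increasing sequences also split and recombine coordinatewise. So $\Lsc_\CC$ and $\LL$ both decompose as direct sums, and the equality holds componentwise.

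\textbf{Inductive limits.} This is the main obstacle. Let $A=\varinjlim A_i$. By continuity of $\Cu$, $\Cu(A)=\varinjlim\Cu(A_i)$, and dually $\TT(A)\cong\varprojlim\TT(A_i)$ as compact cones. The inclusion $\LL\subseteq\Lsc_\CC$ is automatic, so take $f\in\Lsc_\CC(\TT(A))$; the task is to show $f\in\LL(\TT(A))$. I would first write $f$ as a supremum of continuous affine (linear) functions. Then, using compactness of the projective limit together with a Hahn--Banach/Stone--Weierstrass type density argument, each continuous linear function on $\varprojlim\TT(A_i)$ can be approximated from below by pullbacks of continuous linear functions on some $\TT(A_i)$. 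This should express $f=\sup_i \pi_i^*(f_i)$ with $f_i\in\Lsc_\CC(\TT(A_i))$ increasing. By hypothesis each $f_i\in\LL(\TT(A_i))$. The maps $\pi_i^*\colon\LL(\TT(A_i))\to\LL(\TT(A))$ are $\Cu$-morphisms, and $\LL(\TT(A))$ is closed under suprema of increasing sequences, so $f\in\LL(\TT(A))$.

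The delicate point is making the approximation from below compatible with the way-below relation: we need $\pi_i^*(f_i)\ll \pi_j^*(f_j)$ along a subsequence, not just pointwise increase. To secure this I would try the alternative identification $\LL(\TT(A))\cong\varinjlim\LL(\TT(A_i))$ in the category $\Cu$, and check that $\Lsc_\CC(\TT(A))$ is likewise the sequential supremum-closure of $\bigcup_i\pi_i^*\Lsc_\CC(\TT(A_i))$. If that closure argument is too weak, I would instead invoke the domain-theoretic equivalence with Keimel's round-ideal completion, where continuity under directed limits is more standard.
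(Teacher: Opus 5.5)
Your arguments for matrices, stabilization and finite direct sums are fine. The paper leaves these to the reader, and your splitting $f(\tau,\sigma)=f(\tau,0)+f(0,\sigma)$ gives $f=\overline{a\oplus b}$ directly.

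The gap is in the inductive-limit step, whose first move fails. Writing an arbitrary $f\in\Lsc_\CC(\TT(A))$ as a supremum of continuous linear functions is false in general, even for tracially reflexive algebras:
\begin{itemize}
\item[(a)] Any $h\in C_\CC(\TT(A))$ has a compact support ideal $I_h\in\Lat(A)$, and $h\leq f$ forces $I_h\subseteq I_f$.
\item[(b)] In $A=C[0,1]$ the only compact ideals are $0$ and $A$. Take $a(t)=t$ and $f=\overline{a}$, whose support ideal is $C_0((0,1])$. Then every continuous $h\leq f$ has $I_h=0$, so $h=0$, whereas $f(\delta_1)=1$.
\item[(c)] The same example shows that $\LL(\TT(A))$ is not ``generated by continuous affine functions''. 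In its definition the approximants $f_n$ need only be continuous where $f_{n+1}$ is finite.
\end{itemize}
Your Hahn--Banach/Stone--Weierstrass step is modelled on compact convex sets with finite-valued affine functions. It does not transfer as stated to extended cones with idempotents and infinite values.

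You have also misplaced the difficulty. The $\ll$ issue is harmless: $\iota$ preserves suprema of increasing sequences, so the pointwise supremum of any increasing sequence in $\mathcal{R}(A)$ lies in $\mathcal{R}(A)$. The hard part is producing an increasing sequence at all.

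A pointwise representation of $f$ as a supremum of pullbacks $\pi_i^*(g)\leq f$ with $g\in\mathcal{R}(A_i)$ costs nothing. By the weak Schr\"{o}der--Simpson corollary, $f=\sup\{\overline{a}\mid \overline{a}\leq f\}$. If $\|a-\phi_{i,\infty}(b)\|<\epsilon$ with $b\in(A_i\otimes\mathcal{K})_+$, then $\overline{(a-2\epsilon)_+}\leq\overline{\phi_{i,\infty}((b-\epsilon)_+)}\leq\overline{a}$.

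What is missing is a way to merge two such pullbacks into a single linear one that is still below $f$. The pointwise maximum of linear functions is not linear. Turning such a pointwise supremum into the supremum of an increasing sequence is exactly the content of Robert's question. Compare the second criterion, where the directedness of $H_f$ has to be imported from Moodie--Robert.

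The paper handles this step with one assertion. Using $\TT(A)\simeq\varprojlim\TT(A_i)$, it claims the cocone $\Lsc_\CC(\TT(A_i))\to\Lsc_\CC(\TT(A))$ is a colimit in $\mathrm{DoM}$. Since $\Lsc_\CC(\TT(A_i))=\mathcal{R}(A_i)$ and $\mathcal{R}$ is continuous, the universal property yields $\theta\colon\Lsc_\CC(\TT(A))\to\mathcal{R}(A)$, which is inverse to $\iota$.

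Your fallback, that $\Lsc_\CC(\TT(A))$ is the sup-closure of $\bigcup_i\pi_i^*\Lsc_\CC(\TT(A_i))$, amounts to the same claim. It is precisely the step for which your proposal gives no argument.

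The round-ideal fallback does not help either. Since $\mathcal{RJ}(A)\simeq\mathcal{R}(A)$, continuity of $\mathcal{RJ}$ is just continuity of $\mathcal{R}$. The missing surjectivity lives on the double-dual side, and that is Keimel's open question itself.
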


Subsequently, Cuntz semigroup techniques contained in \cite{MR23} allow us to deduce a first criterion for tracial reflexiveness, providing a proof that separable topological dimension zero $\Ca$-algebras are tracially reflexive. Lastly, a domain-theoretical version of the \emph{Schr\"{o}der-Simpson theorem} for \emph{dual pair of cones}, presented in \cite{K15}, yields a second abstract criterion for tracial reflexiveness. In the process, we study properties of unbounded lower-semicontinuous traces through their relationship with \emph{Pedersen ideals}. 

We gather the main results of our investigation in the following theorem.

\begin{thm*} The following classes of $\Ca$-algebras are tracially reflexive. 
\begin{itemize}
\item[(i)] Commutative $\Ca$-algebras and their inductive limits.
\item[(ii)] Separable $\Ca$-algebras such that $\Cu(A)\otimes \{0,\infty\}$ is algebraic. Equivalently, separable $\Ca$-algebras with topological dimension zero.
\item[(iii)] $\Ca$-algebra such that $\Ped(A)_+\cap I\subseteq \Ped(I)_+$, for any $I\in \Lat(A)$.
\end{itemize}
\end{thm*}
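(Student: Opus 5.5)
This theorem collects results that will be established in the body of the paper, so the plan is to assemble it from three independent ingredients. Throughout, tracial reflexiveness of $A$ means that the canonical inclusion $\LL(\TT(A))\subseteq \Lsc_\mathcal{C}(\TT(A))$ is an equality. Here $\TT(A)$ is the cone of lower-semicontinuous $[0,\infty]$-valued traces on $A$ with its weak topology, and $\LL(\TT(A))$ is Robert's completion, the sup-closure of functions $\widehat{a}$, $\tau\mapsto \tau(a)$, coming from $\Cu(A)$.

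\emph{Item (i).} First I identify $\TT(A)$ with Keimel's dual of the topological monoid $(A_+,\text{Scott-type topology})$. Under this identification, $\Lsc_\mathcal{C}(\TT(A))$ becomes the double dual and $\LL(\TT(A))$ becomes the round-ideal completion. This gives the equivalence between Robert's question and Keimel's question \cite[Question 4.4]{K17}. For $A=C_0(X)$, traces are Radon measures on $X$, and Keimel's results (valuations versus lower-semicontinuous functions on a locally compact space) answer his question positively. Hence commutative algebras are tracially reflexive. For inductive limits, I use that $\TT(\varinjlim A_i)=\varprojlim \TT(A_i)$ as compact cones, via continuity of $\Cu$ and of its functional dual. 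A continuous linear function on the projective limit is then a sup of functions pulled back from the finite stages, by a Stone--Weierstrass/Dini-type density argument in $\Lsc_\mathcal{C}$. At each stage these functions lie in $\LL$, and $\LL$ is closed under such suprema. Finite sums, matrices and stabilization are immediate since $\TT$ is unchanged or a product.

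\emph{Item (ii).} I use the Cuntz semigroup criterion from \cite{MR23}. When $\Cu(A)\otimes\{0,\infty\}$ is algebraic, the ideal lattice $\Lat(A)$ has a basis of compact ideals. Every $f\in\Lsc_\mathcal{C}(\TT(A))$ is determined by its values on the cones $\TT(I)$, and each "support" $\{f=\infty\}$ or $\{f>0\}$ is cut out by an ideal. This lets me approximate $f$ from below by functions $\widehat{a}$ with $a\in\Cu(A)$, using a strict-comparison/realization step on compact pieces: the functionals on $\Cu(A)$ realize every affine continuous function supported on a compact ideal, by \cite{MR23}. The equivalence with topological dimension zero is a known fact for separable algebras.

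\emph{Item (iii).} I invoke Keimel's weak Schr\"oder--Simpson theorem for dual pairs of cones \cite{K15}. This says that a linear lower-semicontinuous functional on the dual cone is a sup of evaluations provided a local convexity/compatibility of the pairing holds. I translate that hypothesis into: traces restricted to ideals are determined by Pedersen-ideal elements, which is exactly $\Ped(A)_+\cap I\subseteq\Ped(I)_+$. The main obstacle is this translation. I have to check that every lower-semicontinuous trace on $I$ extends compatibly, and that finite traces on $\Ped(I)$ are detected by $A$'s Pedersen elements. I would try to prove this using the minimality of the Pedersen ideal and a hereditary-subalgebra approximation argument.
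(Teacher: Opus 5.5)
\textbf{Item (iii).} Your plan puts the hypothesis in the wrong place, and it lacks the step that actually produces an element of $\LL(\TT(A))$.

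Keimel's weak Schr\"oder--Simpson theorem \cite[Theorem 4.3]{K15} holds for \emph{every} dual pair of cones, with no local-convexity or compatibility condition. Applied to $(\mathcal{R}(A),\TT(A),\omega_A)$, it shows for an arbitrary $\Ca$-algebra that each $f\in\Lsc_\mathcal{C}(\TT(A))$ is the pointwise supremum of $\{\overline{a}\mid \overline{a}\le f\}$; this is \autoref{cor:SS}. So $\Ped(A)_+\cap I\subseteq\Ped(I)_+$ cannot enter at that stage. Nor can a pointwise supremum end the argument, since otherwise every $\Ca$-algebra would be tracially reflexive. The obstruction is that $\LL(\TT(A))$ is only closed under suprema of increasing sequences, while $\{\overline a\le f\}$ need not be upward directed.

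The missing idea is to make the approximants \emph{continuous}:
\begin{itemize}
\item[(a)] Replace $a$ by the cut-downs $(a-\delta)_+\in\Ped(A\otimes\mathcal{K})_+$ with $\overline a\le(1-\epsilon)f$. This does not lower the pointwise supremum, by lower semicontinuity of traces.
\item[(b)] The hypothesis places $(a-\delta)_+$ in the Pedersen ideal of the ideal it generates. Then \autoref{prop:pepite} gives $\overline{(a-\delta)_+}\in C_\mathcal{C}(\TT(A))$.
\item[(c)] Hence $f$ is the pointwise supremum of the set $H_f$ of continuous $h\le(1-\epsilon)f$. This set is upward directed by the proof of \cite[Theorem 4.4]{MR23}.
\item[(d)] Since $\mathcal{R}(A)$ is countably based when $A$ is separable, $H_f$ has a supremum in $\mathcal{R}(A)$, and it equals $f$.
\end{itemize}
The paper's theorem in \S4.2 assumes separability for exactly step (d); your plan never uses it. The extension and detection questions you flag as the main obstacle are not what is needed.

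\textbf{The same issue in your other steps.} You conflate ``a supremum'' with ``the supremum of an increasing sequence'' elsewhere too.

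For inductive limits, ``$\LL$ is closed under such suprema'' holds only for increasing sequences. A Stone--Weierstrass/Dini-type density statement does not supply those, and those tools do not apply to $[0,\infty]$-valued lower-semicontinuous linear maps anyway. What is needed is that $\Lsc_\mathcal{C}$ turns $\TT(A)\simeq\varprojlim\TT(A_i)$ into the $\mathrm{DoM}$-colimit of the $\Lsc_\mathcal{C}(\TT(A_i))=\mathcal{R}(A_i)$. The paper then uses continuity of $\mathcal{R}$ and the universal property to obtain $\theta\colon\Lsc_\mathcal{C}(\TT(A))\to\mathcal{R}(A)$ inverse to $\iota$.

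For (ii), ``approximate $f$ from below'' leaves directedness and countability open. The paper applies \cite[Theorem 5.3]{MR23} to $S=\Cu(A\otimes\mathcal{W})\simeq\Cu(A)_R$, which satisfies (O5) and (O6) and has $\Lat(S)\simeq\Cu(A)\otimes\{0,\infty\}$ algebraic. This gives $S\simeq(\Lsc_\mathcal{C}(\FF(S)))_\sigma$. Separability makes $\FF(S)$ metrizable, so the subscript $\sigma$ can be dropped.

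\textbf{Commutative case.} This follows the paper's route through \cite{K17}. You should still justify that $\Lsc_\mathcal{C}$ computed for $\tau_{iv}$ equals Keimel's double dual, which is defined via $\tau_{up}$. The reason is that cone morphisms $\TT(A)\to[0,\infty]$ are order-preserving, since the pointwise and algebraic orders on $\TT(A)$ agree. An order-preserving map is $\tau_{up}$-lower-semicontinuous exactly when it is $\tau_{iv}$-lower-semicontinuous.
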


Let us point out that the nomenclature of \emph{tracial reflexiveness} comes from the domain-theoretical formulation of the problem. 

We end the manuscript with open lines of investigation and the following conjecture.
\begin{prg*}[\textbf{Conjecture}] Any stable rank one $\Ca$-algebra is tracially reflexive.\end{prg*}

\textbf{Acknowledgments}. The author would like to thank J. Bosa for the financial support, freedom and trust granted.\\

\textbf{Categories involved in the manuscript}. By a \textbf{cone}, we  mean an abelian monoid $(C,+)$, with a scalar multiplication by positive real numbers compatible with the addition. We do not assume cones to be cancellative. (Equivalently, we do not assume cones to embed into an ordered vectorial space.) By a \emph{cone morphism}, we mean a monoid morphism compatible with the scalar multiplication, and we denote the category of cones by $\boldsymbol{\mathcal{C}}$. 

By a \textbf{topological cone}, we mean a cone equipped with a topology making the addition and scalar multiplication jointly-continuous.  By a \emph{toplogical cone morphism}, we mean a continuous cone morphism, and we denote the category of topological cones by $\boldsymbol{\mathrm{T}op_\mathcal{C}}$.

By a \textbf{d-monoid}, we mean an (abelian) ordered monoid $(C,+,\leq)$, such that its \emph{Scott topology} (whose closed sets are lower-sets closed under suprema of increasing nets, as long as they exist) is compatible with the addition, and turns $C$ into a \emph{complete} topological space. (Remark that in the separable setting, we may replace nets by increasing sequences.)
By a \emph{d-morphism}, we mean a Scott-continuous monoid morphism, and we denote the category of d-monoids by $\boldsymbol{\rm DoM}$. 

The \textbf{compact-containment} relation on a d-monoid $S$ is an auxiliary relation determined by the order as follows. For any $s,t\in S$, we say that $s$ is \emph{compactly-contained} in $t$, and we write $s\ll t$ if, for any increasing sequence $(t_n)_n$ such that $t\leq \sup t_n$, there exists $n\in\N$ such that $s\leq t_n$. The set of elements that are compactly-contained in some other element will be denoted by $S_\ll=\{s\in S\mid \exists t\in S, s\ll t\}$.

By a \textbf{$\boldsymbol{\Cu}$-semigroup}, we mean a (sequentially) continuous d-monoid whose compact-containment relation is compatible with the addition. By a \emph{$\Cu$-morphism}, we mean a d-monoid morphism compatible with the compact-containment relation, and we denote the category of $\Cu$-semigroups by $\boldsymbol{\Cu}$. For a detailed and broad survey on $\Cu$-semigroups, we refer the reader to \cite{GP24}.

By a \textbf{$\boldsymbol{\Cu}$-cone}, we mean a $\Cu$-semigroup $(S,+,\leq)$ with a scalar multiplication by positive real numbers compatible with the addition, the order, and suprema of increasing sequences. (Remark that we do not require the scalar multiplication to be compatible with the compact containment relation.) By a \emph{$\Cu$-cone morphism}, we mean a $\Cu$-morphism compatible with the scalar multiplication, and we denote the category of $\Cu$-cones by $\boldsymbol{\Cu_{\mathcal{C}}}$. 

\section{Tracially reflexive $\Ca$-algebras}
This section sets the notion of \emph{tracially reflexive $\Ca$-algebras}. These $\Ca$-algebras can be thought of a stable analogue of \emph{tracially ordered $\Ca$-algebras}, introduced in \cite{J25}, by taking into account unbounded traces. 
We start off with basic facts (probably known by experts) for the sake of nomenclature and clarity. 
\begin{prg}\textbf{Tracial states and Cuntz-Pedersen relation}.  For a $\Ca$-algebra $A$, we denote the set of tracial states by $\mathrm{T}_1(A)$. It is well-known that endowed with the weak*-topology, this set becomes a Choquet simplex. In particular, it is a Hausdorff compact convex set and a complete lattice with the Riesz interpolation property for the point-wise order. 

We denote the set of real-valued affine continuous functions over $\mathrm{T}_1(A)$ by $\mathrm{Aff T}_1(A)$, and we set $\mathrm{Aff T}_1(A)_+:=\{f \mid f(\tau)\geq 0,  \forall \, \tau \in \mathrm{T}_1(A) \}$ and $\mymathbb{1}\colon \tau\mapsto 1$. It is well known that the order given by $\mathrm{Aff T}_1(A)_+$ makes $(\mathrm{Aff T}_1(A), \mymathbb{1})$ into an order-unit space (i.e., an Archimedean ordered vectorial space with an order-unit) which is complete for the order-unit norm. (See \cite{A71}.) It can also be checked that the order-unit norm is equal to the supremum norm. As a consequence, the set of linear maps between any two of these Banach order-unit spaces is equipped with the topology induced by supremum norm over the unit ball, which turns out to be the same as the supremum norm of the unit positive ball. That is, for any $\phi\colon E\rightarrow F$, we define $\Vert \phi \Vert:= \sup_{0\leq f\leq \mymathbb{1}}\{\phi(f)\}$.

Finally, we recall that the assignments $\mathrm{T}_1\colon \Ca \rightarrow \rm Choq$ and $\mathrm{AffT}_1\colon \Ca \rightarrow \R\rm Ban_{\mymathbb{1}}$ are well-defined contravariant and covariant continuous functors respectively. These functors play a significant role as part of the original Elliott invariant and its refinements. 

Another picture of the tracial information of $\Ca$-algebras can be found in the pioneer work \cite{CP79} of Cuntz and Pedersen. They consider a quotient construction on self-adjoint elements, which gives an algebraic description of the functor $\mathrm{Aff T}_1$, under certain assumptions. 

More precisely, for any $\Ca$-algebra $A$, they consider the (sub)set $A_0:=\{a-b \mid a,b \in A_+, a\sim_{\rm CP} b\}$ of $A_{\rm sa}$, where $\sim_{\rm CP}$ is an equivalence relation on $A_+$ given by $a\sim_{\rm CP} b$, whenever $a=\sum x_nx_n^*$ and $b=\sum x_n^*x_n$, for some sequence $(x_n)_n$ in $A$. 

Let $q\colon A_{\rm sa}\rightarrow A_{\rm sa}/A_0$ be the quotient map. Denote $A^q:=A_{\rm sa}/A_0$ and $A^q_+:=q(A_+)$. It is easily seen that the evaluation map $\mathrm{ev}\colon A_{\rm sa}\rightarrow\mathrm{AffT}_1(A)$ factorizes through $A_{\rm sa}/A_0$. Actually, the evaluation map is an isometric isomorphism $\ev\colon A_{\rm sa}/A_0\simeq \mathrm{AffT}_1(A)$, where $A_{\rm sa}/A_0$ is equipped with the quotient topology coming from the norm. As a result, we can characterize $A_0=\{ h\in A_{\rm sa}\mid   \forall \, \tau \in \mathrm{T}_1(A),\tau(h)=0 \}$. (Note that $A_0$ can also be characterized as the kernel of the \emph{universal trace} $\mathrm{T}r\colon A\rightarrow A/\overline{[A,A]} $ restricted to $A_{\rm sa}$.) 

By \cite{A71}, we know that the order given by $A^q_+$ makes $A^q$ into an ordered vectorial space. However, $A^q$ may fail to be Archimedean. This is due to the fact that, even though $a\sim_{\rm CP} b$ implies $a-b \in A_0$, the converse may fail. This subtlety will be addressed more thoroughly in the subsequent section. 

As a consequence, $q(A_+)$ embeds in $\mathrm{Aff T}_1(A)_+$, via the evaluation map, but this map may fail to be exhaustive. Equivalently, we have that $q(A_+)\subseteq\{h+A_0, h \in A_{\rm sa} \mid \forall \, \tau \in \mathrm{T}_1(A), \tau(h)\geq 0  \}$, but the inclusion may be strict. This has led Jacelon to introduce the notion of \emph{tracially ordered} $\Ca$-algebras.

\begin{dfn}[{\cite[Definition 2.6]{J25}}]
We say that a $\Ca$-algebra $A$ is \emph{tracially ordered} if 
\begin{itemize} 
\item[(i)] $\mathrm{T}_1(A)\neq \emptyset$. 
\item[(ii)] $q(A_+)=\{h+A_0, h \in A_{\rm sa} \mid \tau(h)\geq 0,  \forall \, \tau \in \mathrm{T}_1(A) \}$.
\end{itemize}
\end{dfn}

It is shown therein that the ordered vectorial space $A^q$ of  any tracially ordered $\Ca$-algebra $A$ is Archimedean, and that in the unital case, $(A^q,q(1_A))$ becomes a complete order-unit space for the order-unit norm, which agrees with the quotient norm. As a consequence, we straightforwardly obtain the following characterization for the unital case.

\begin{prop}
Let $A$ be a unital $\Ca$-algebra. The following are equivalent
\begin{itemize}
\item[(i)] $A$ is tracially ordered.
\item[(ii)] $(A^q, q(A_+),[1]_A)\simeq (\mathrm{AffT}_1(A),\mathrm{AffT}_1(A)_+,\mymathbb{1})$ as complete order-unit spaces.
\end{itemize}
\end{prop}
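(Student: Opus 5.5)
The plan is to compare the two structures through the evaluation map and to use the facts recalled above. These are: the isometric isomorphism $\ev\colon A_{\rm sa}/A_0\simeq \mathrm{AffT}_1(A)$ for the quotient norm, and the result from \cite{J25} that for a tracially ordered unital $A$, the space $(A^q,q(1_A))$ is an Archimedean complete order-unit space whose order-unit norm agrees with the quotient norm. Since $A$ is unital, $\mathrm{T}_1(A)$ is a nonempty compact convex set, and $\ev(q(1_A))=\mymathbb{1}$. So in both directions the candidate isomorphism is $\ev$ itself. It is already a linear bijection preserving the units, and the only question is whether it identifies the positive cones.

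For (i)$\Rightarrow$(ii), I would use condition (ii) of the definition of tracially ordered. It says exactly that $q(A_+)$ coincides with $\{h+A_0\mid \tau(h)\geq 0 \ \forall\tau\in\mathrm{T}_1(A)\}$, and this set is precisely $\ev^{-1}(\mathrm{AffT}_1(A)_+)$. Hence $\ev$ is an order isomorphism with $\ev(q(A_+))=\mathrm{AffT}_1(A)_+$ and $\ev(q(1_A))=\mymathbb{1}$. The result of \cite{J25} then gives that $(A^q,q(A_+),q(1_A))$ is a complete order-unit space whose order-unit norm is the quotient norm. Since $\ev$ is isometric for the quotient norm and the order-unit norm on $\mathrm{AffT}_1(A)$ is the supremum norm, $\ev$ is an isomorphism of complete order-unit spaces.

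For (ii)$\Rightarrow$(i), condition (i) of the definition holds automatically because $A$ is unital. For condition (ii), I first need to make sure the abstract isomorphism in (ii) can be taken to be $\ev$. This is the main obstacle. If (ii) is read as asserting that $\ev$ (the canonical map) is the isomorphism, then $\ev(q(A_+))=\mathrm{AffT}_1(A)_+$, and pulling back gives the required equality of cones. If only an abstract isomorphism $\phi$ is given, I would argue as follows. Since $(A^q,q(1_A))$ is then an Archimedean order-unit space, its order-unit norm is determined by the cone, and $\ev$ is a positive unital map, hence contractive for the order-unit norms. Moreover, every state of $(A^q,q(A_+),q(1_A))$ is induced by a tracial state of $A$, because a positive unital functional on $A^q$ composed with $q$ is a positive unital trace-like functional on $A_{\rm sa}$ vanishing on $A_0$, hence a tracial state. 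Kadison's representation theorem identifies an Archimedean order-unit space with affine functions on its state space. The state space here is $\mathrm{T}_1(A)$, so an element $h+A_0$ is positive exactly when $\tau(h)\geq 0$ for all $\tau$. This gives condition (ii) of the definition without reference to $\phi$. I expect this identification of states with tracial states, together with the Archimedean property inherited through $\phi$, to be the only step needing care; the rest is bookkeeping.
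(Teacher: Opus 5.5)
There is one genuine error: the claim ``since $A$ is unital, $\mathrm{T}_1(A)$ is a nonempty compact convex set'' is false. Unitality gives compactness, not nonemptiness. For instance $\mathcal{O}_2$, $B(H)$ with $H$ infinite-dimensional, or any properly infinite unital $\Ca$-algebra has no tracial state.

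In (i)$\Rightarrow$(ii) this does no harm, because nonemptiness is part of the hypothesis. In (ii)$\Rightarrow$(i), however, it is exactly how you dispose of condition (i) of the definition, and that step fails. Moreover, for such $A$ the characterization $A_0=\{h\in A_{\rm sa}\mid \tau(h)=0\ \forall\,\tau\in\mathrm{T}_1(A)\}$ gives $A_0=A_{\rm sa}$. In $\mathcal{O}_2$ you can see this directly: with $s_1,s_2$ the canonical generators and $x_i=s_i^*$, one gets $2\cdot 1_A\sim_{\rm CP}1_A$, so $1_A\in A_0$. Hence $A^q=\{0\}=\mathrm{AffT}_1(A)$ and $[1]_A=0=\mymathbb{1}$. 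If the zero space with zero unit counts as an order-unit space, then (ii) holds and (i) fails. So the proposition relies on the implicit convention that order-unit spaces are nonzero, and your proof must actually use that convention to obtain $\mathrm{T}_1(A)\neq\emptyset$.

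The repair is short. Under that convention, $\mymathbb{1}\neq 0$ in $\mathrm{AffT}_1(A)$, which forces $\mathrm{T}_1(A)\neq\emptyset$. Alternatively, within your own route: a nonzero Archimedean order-unit space admits a state by Hahn--Banach, and you have already shown that every state of $A^q$ comes from a tracial state.

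Everything else is sound and matches what the paper has in mind. The paper gives no separate proof; it only says the proposition follows straightforwardly from the Cuntz--Pedersen isometric isomorphism $\ev$ and Jacelon's result. Your (i)$\Rightarrow$(ii) is exactly that deduction. Your state-space/Kadison argument for an abstract isomorphism is also correct. It in fact shows that, for unital $A$ with $\mathrm{T}_1(A)\neq\emptyset$, tracial orderedness is equivalent to $A^q$ being Archimedean.
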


Among tracially ordered $\Ca$-algebras sit the following classes: simple unital exact stably finite $\Ca$-algebras, unital commutative $\Ca$-algebras, dimension drop $\Ca$-algebras. (See \cite[Proposition 2.7]{J25}.) Nevertheless, we can already see that tracial orderedness does not behave well with regard to permanence properties. Indeed, going to either inductive limits, the non-unital setting, or the non-simple setting, the information given by tracial states may be unsatisfactory. In fact, there might not be any bounded tracial functional on the $\Ca$-algebra at hand. E.g., the $\Ca$-algebra $\mathcal{K}$ of compact operators over any infinite dimensional Hilbert space, has an empty tracial state space and hence, is not tracially ordered. As a consequence, it is not true that all $\AF$-algebras are tracially ordered, nor it is known if unital ones are. Similar questions arise for larger classes, such as $\AH$-algebras or inductive limits of dimension-drop $\Ca$-algebras.

To remedy this issue, the notion of \emph{traces} becomes the adequate generalization to consider. In view of obtaining a robust analogue of tracial orderedness, we work with traces instead of tracial states. Let us recall some facts that have been gathered around traces of $\Ca$-algebras. We refer the reader to \cite{ERS11,R09,R13} for more details.
\end{prg}

\begin{prg} \textbf{The cone of traces and the Robert relation}. By a \emph{trace} on a $\Ca$-algebra $A$, we mean a cone morphism $\tau\colon A_+\rightarrow [0,\infty]$ satisfying the trace identity, i.e. such that $\tau(xx^*)=\tau(x^*x)$ for any $x\in A$. 
By a \emph{lower-semicontinuous function on a topological space $X$}, we mean a continuous map $f\colon X\rightarrow [0,\infty]$, where $[0,\infty]$ is given \emph{the Scott topology}. We recall that this topology is characterized by its closed sets being the lower-sets closed under suprema of increasing sequences. Equivalently, any map $f\colon X\rightarrow [0,\infty]$ is lower-semicontinuous, if $f^{-1}((r,\infty])$ is open, for any $r\in \R_+$.

We denote the set of lower-semicontinuous traces on a $\Ca$-algebra $A$ by $\mathrm{T}(A)$. It forms an ordered monoid for the point-wise operations, which is a complete lattice. Following \cite{ERS11}, we consider a topology on $\TT(A)$, in which a given net $(\tau_i)_i$ converges to $\tau$, if \[\lim\sup \tau_i((a-\epsilon)_+)\leq \tau(a)\leq \lim\inf \tau_i(a)\] for any $a\in A_+$ and any $\epsilon>0$. (Here, $(a-\epsilon)_+$ denotes the \textquoteleft $\epsilon$ cut-down\textquoteright\ of $a$ obtained via functional calculus, after applying $f_\epsilon:=\max(0,\id-\epsilon)$.) We may refer to the latter as the \emph{interval topology}, that we denote $\tau_{iv}$. (See \autoref{prg:Keimel}.)

Endowed with the interval topology, $\TT(A)$ becomes topological cone (i.e., the addition and scalar multiplication are jointly continuous) which is compact Hausdorff and (locally) convex.  The category $\mathcal{C}_{\rm Choq}$ of \emph{extended Choquet cones} has been introduced in \cite{MR23}, to describe appropriately the categorical framework of the functor $\mathrm{T}\colon \Ca\rightarrow  \mathcal{C}_{\rm Choq}$. 
 
We have previously seen that the Cuntz-Pedersen equivalence relation $\sim_{\rm CP}$ on the set of positive elements is closely linked to the evaluation on tracial states. More concretely, we can consider a canonical subequivalence induced by $\sim_{\rm CP}$ and the order on $A_+$ as follows: $a\lesssim_{\rm CP}b$ whenever $a\sim_{\rm CP}a'\leq b$ for some $a'\in A_+$. Remark that $a\sim_{\rm CP}b$ if $a\lesssim_{\rm CP}b$ and $b\lesssim_{\rm CP}a$. The original work \cite{CP79} of Cuntz and Pedersen shows that $a\lesssim_{\rm CP}b$ implies $q(a)\leq q(b)$, which is in turn equivalent to $\tau(a)\leq\tau(b)$ for any $\tau\in \mathrm{T}_1(A)$. A fortiori, $a\sim_{\rm CP}b$ implies that $\tau(a)=\tau(b)$ for any $\tau\in \mathrm{T}_1(A)$.
Nevertheless, the converse need not be true. This is easily seen in any $\Ca$-algebras with an empty tracial state space, (e.g. $\mathcal{K}$), since the consequent is vacuously true. 

To address this subtlety, Robert has introduced in \cite{R09}, a stronger subequivalence relation on positive elements, related to comparison on traces. We write $a\lesssim_{\rm R}b$ if, for any $\epsilon >0$, there exists some $\delta>0$ such that $(a-\epsilon)_+ \lesssim_{\rm CP}  (b-\delta)_+$. Inverting the process of before, this subequivalence yields an equivalence relation on $A_+$ as follows: $a\sim_{\rm R}b$ whenever $a\lesssim_{\rm R}b$ and $b\lesssim_{\rm R}a$. Remark that our notations differ from the original paper \cite{R09} to avoid any confusion with the Cuntz-Pedersen (sub)equivalence.
The main result of \cite{R09} states the following.

\begin{thm}[{\cite[Theorem 1]{R09}}]\label{thm:R09} Let $A$ be a $\Ca$-algebra and $a,b\in A_+$. 
\begin{itemize}
\item[(i)] $a\lesssim_{\rm R} b $ if and only if  $\tau(a)\leq \tau(b)$ for any $\tau \in \mathrm{T}(A)$.
\item[(ii)] $a\sim_{\rm R} b $ if and only if $a\sim_{\rm CP} b$ if and only if $\tau(a)=\tau(b)$ for any $\tau \in \mathrm{T}(A)$.
\end{itemize}
\end{thm}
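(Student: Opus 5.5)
The easy direction of (i) comes first. Suppose $a\lesssim_{\rm R}b$ and fix $\tau\in\TT(A)$ and $\epsilon>0$. There is $\delta>0$ and $a'\in A_+$ with $(a-\epsilon)_+\sim_{\rm CP}a'\leq(b-\delta)_+\leq b$. If $(a-\epsilon)_+=\sum x_nx_n^*$ and $a'=\sum x_n^*x_n$, then lower semicontinuity gives $\tau(\sum x_nx_n^*)=\sum\tau(x_nx_n^*)$. Here $\tau$ is lsc, additive, and the partial sums increase to the norm limit, so $\tau$ of the limit equals the supremum of $\tau$ of the partial sums. The trace identity then gives $\tau((a-\epsilon)_+)=\tau(a')\leq\tau(b)$. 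Letting $\epsilon\to0$ and using lower semicontinuity yields $\tau(a)=\sup_\epsilon\tau((a-\epsilon)_+)\leq\tau(b)$.

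For (ii), the easy implications are $a\sim_{\rm CP}b\Rightarrow\tau(a)=\tau(b)$ (same computation) and $\tau(a)=\tau(b)$ for all $\tau$ $\Rightarrow a\sim_{\rm R}b$ (by (i)). The remaining implication $a\sim_{\rm R}b\Rightarrow a\sim_{\rm CP}b$ needs a separate argument. I would show that $\lesssim_{\rm R}$ in both directions lets one build sequences $(x_n)$ by a back-and-forth telescoping. Write $a=\sum_k\big((a-\epsilon_{k+1})_+-(a-\epsilon_k)_+\big)$ with $\epsilon_k\downarrow0$, and similarly for $b$. Then repeatedly use Cuntz–Pedersen's lemma: if $c\sim_{\rm CP}c'\leq d$, then $d-c'$ is positive and $d\sim_{\rm CP}c+(d-c')$. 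Alternating between $a$ and $b$ absorbs the remainders, so that the infinite sums converge and produce $a\sim_{\rm CP}b$.

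The hard direction of (i) is: $\tau(a)\leq\tau(b)$ for all $\tau\in\TT(A)$ implies $a\lesssim_{\rm R}b$. I would argue by contraposition with a Hahn–Banach separation. Fix $\epsilon>0$ and suppose $(a-\epsilon)_+\not\lesssim_{\rm CP}(b-\delta)_+$ for every $\delta>0$. Let $I$ be the closed ideal generated by $b$. Two cases arise.

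If $(a-\epsilon)_+\notin I$, take a trace that vanishes on $I$ and is $\infty$ on elements outside $I$. This is the lsc trace $\tau_I(x)=0$ for $x\in I_+$ and $\infty$ otherwise. It gives $\tau_I(a)=\infty>0=\tau_I(b)$.

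Otherwise, work inside $\Ped(I)$. The set $C_\delta=\{c\in A_{\rm sa}: c\lesssim_{\rm CP}\text{-dominated by }(b-\delta)_+\}$, or more precisely the convex cone $\{x-y\}$ generated from CP-relations, is convex. I would use the Cuntz–Pedersen characterization of the closure of $A_0+A_+$-type cones in the hereditary algebra generated by $(b-\delta/2)_+$, which is a subalgebra of $\Ped(A)$. Separation produces a positive tracial functional $\tau_\delta$ on that ideal with $\tau_\delta((a-\epsilon)_+)>\tau_\delta((b-\delta)_+)$, normalized so that $\tau_\delta((b-\delta)_+)\leq1$. This is exactly Cuntz–Pedersen's theorem $q(a)\leq q(b)\iff$ tracial inequality, applied in a unital-like corner. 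Each $\tau_\delta$ extends to a lower semicontinuous trace on $A$ via the standard extension from the Pedersen ideal.

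I expect the main obstacle to be passing from the family $(\tau_\delta)$ to a single $\tau\in\TT(A)$ with $\tau(a)>\tau(b)$. I would first try compactness of $\TT(A)$ in the interval topology and take a limit point $\tau$ as $\delta\to0$. The convergence inequalities $\limsup\tau_i((c-\eta)_+)\leq\tau(c)\leq\liminf\tau_i(c)$ give, with suitable choices of cut-downs, $\tau((a-2\epsilon)_+)\geq\limsup\tau_\delta((a-\epsilon)_+)$ bounded below, and $\tau(b)\leq\liminf\tau_\delta(b)$. The delicate point is controlling $\tau_\delta(b)$ rather than $\tau_\delta((b-\delta)_+)$. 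To handle it, I would rescale and use $b\le(b-\delta)_++\delta\cdot 1$ in a local sense, paying attention to the order of the cut-downs.
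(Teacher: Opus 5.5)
The paper gives no proof of this statement: it is quoted from \cite{R09}. Your argument therefore has to stand on its own, and it has a genuine gap in the hard direction of (i). The easy implications you give are correct.

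\textbf{The separation step.} A Hahn--Banach argument in a corner $D$ only yields a dichotomy. Either some bounded trace on $D$ separates $x$ from $y=(b-\delta)_+$, or $y-x\in\overline{D_0+D_+}$, i.e.\ $q(x)\le q(y)$ in the Cuntz--Pedersen quotient. That quotient order is strictly weaker than $\lesssim_{\rm CP}$; as the paper itself notes, $a-b\in A_0$ does not imply $a\sim_{\rm CP}b$. So $(a-\epsilon)_+\not\lesssim_{\rm CP}(b-\delta)_+$ does not produce your $\tau_\delta$.

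Here is a concrete failure. In $A=\widetilde{\mathcal{K}}$, let $p_1,p_2\in\mathcal{K}$ be projections of rank one and two, $a=(1+\epsilon)p_2$ and $b=(1+\delta)p_1+\tfrac{3\delta}{4}(1-p_1)$.
\begin{itemize}
\item[(a)] $(a-\epsilon)_+=p_2\not\lesssim_{\rm CP}p_1=(b-\delta)_+$, as comparing $\mathrm{Tr}$ shows.
\item[(b)] $(b-\delta/2)_+$ is invertible, so both your corner and $\Ped(I)$ are all of $\widetilde{\mathcal{K}}$.
\item[(c)] The bounded traces of $\widetilde{\mathcal{K}}$ are multiples of the character at infinity, and these vanish on $p_1$ and $p_2$, so no $\tau_\delta$ exists here.
\end{itemize}

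The missing idea is to use the slack in $\delta$ through a cancellation argument in the non-separated alternative.
\begin{itemize}
\item[(1)] Take a function $g$ of $b$ that acts as a unit on a corner containing $x$ and $(b-\delta)_+$, with $(b-\delta)_++2\eta g\le(b-\delta/4)_+$.
\item[(2)] Approximating $y-x$ within $\eta$ gives $y+\eta g-x=c-d+p$ with $p\ge0$ and $c\sim_{\rm CP}d$ in the corner. Hence $x+d\lesssim_{\rm CP}y+\eta g+d$.
\item[(3)] By induction, $nx+d\lesssim_{\rm CP}n(y+\eta g)+d$.
\item[(4)] Divide by $n$ and absorb $d/n\le\eta g$ to get $x\lesssim_{\rm CP}(b-\delta/4)_+$.
\end{itemize}
You must also first replace a cut-down of $(a-\epsilon)_+$ by a $\sim_{\rm CP}$-equivalent element of that corner, since $(a-\epsilon)_+$ itself is not in it. Only with these ingredients does failure of subequivalence for every $\delta$ yield the family $(\tau_\delta)$.

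\textbf{The limit step.} The estimate $b\le(b-\delta)_++\delta\cdot 1$ is of no use: there is no unit, and $\tau$ may be infinite on any local unit. The obstacle you flag is handled as follows.
\begin{itemize}
\item[(1)] Normalize $\tau_\delta$ to be $1$ on the separated cut-down, say $(a-2\epsilon)_+$.
\item[(2)] Apply $\tau(c)\le\liminf\tau_\delta(c)$ to $c=(b-\eta)_+$ with $\eta$ fixed; this lies below $(b-\delta)_+$ once $\delta<\eta$, so $\tau(b)\le1$.
\item[(3)] The $\limsup$ inequality gives $\tau((a-\epsilon)_+)\ge1$.
\end{itemize}
This is only a non-strict inequality. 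The contradiction comes from noting that $\tau(a)\le\tau(b)$ would force $\tau(\min(a,\epsilon))=0$. Since $\min(a,\epsilon)\ge\frac{\epsilon}{\|a\|}a$, this gives $\tau(a)=0$.

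\textbf{Part (ii).} Slicing $a$ into the fixed pieces $(a-\epsilon_{k+1})_+-(a-\epsilon_k)_+$ cannot be matched with pieces of $b$ using $\lesssim_{\rm R}$ alone. What works is applying (i) alternately to remainders.
\begin{itemize}
\item[(1)] If $(a-\epsilon)_+\sim_{\rm CP}b_1\le(b-\delta)_+$, then $\min(a,\epsilon)$ and $b-b_1$ again agree on all of $\TT(A)$. When $\tau(a)=\infty$, use $\min(a,\epsilon)\ge\frac{\epsilon}{\|a\|}a$ and $b-b_1\ge\min(b,\delta)\ge\frac{\delta}{\|b\|}b$.
\item[(2)] Swap the roles of $a$ and $b$ and iterate.
\item[(3)] The norms of the remainders tend to zero, and countable additivity of $\sim_{\rm CP}$ gives $a\sim_{\rm CP}b$.
\end{itemize}
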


Let $A$ be a $\Ca$-algebra. Let us denote the set of lower-semicontinuous functions on $\TT(A)$ which are also cone morphisms, by $\Lsc_{\mathcal{C}}(\TT(A))$. (That is, the set of cone morphisms $f\colon\TT(A)\rightarrow [0,\infty]$ that are $(\tau_{iv},\tau_{\rm Scott})$-continuous.) Observe that $\Lsc_{\mathcal{C}}(\TT(A))$ is a d-cone (i.e., a d-monoid which is also a cone) for the point-wise operations. For any $a\in A_+$, it has been shown in \cite{ERS11} that the evaluation map $\overline{a}\colon \TT(A)\rightarrow [0,\infty]$ is an element of  $\Lsc_{\mathcal{C}}(\TT(A))$. Moreover, it is well-known that any lower-semicontinuous trace on $A$ extends (uniquely) to $A\otimes \mathcal{K}$. More particularly, we have that $\TT(A\otimes \mathcal{K})\simeq \TT(A)$ and for any $a\in (A\otimes\mathcal{K})_+$, the evaluation map $\overline{a}$ also belongs to $\Lsc_{\mathcal{C}}(\TT(A))$. 

Lastly, it is not known whether $\Lsc_{\mathcal{C}}(\TT(A))$ is a $\Cu$-semigroup. Nevertheless, it contains a $\Cu$-cone termed $\LL(\TT(A))$, consisting of functions $f\in\Lsc_{\mathcal{C}}(\TT(A))$ that can be expressed as suprema of increasing sequences $(f_n)_n$ such that $f_n$ is $\tau_{iv}$-continuous at each point where $f_{n+1}$ is finite. (See \cite{ERS11,R09}.)
\end{prg}

\begin{prg} \textbf{The functor $\mathcal{R}$}.
For a $\Ca$-algebra $A$, we equip $(A\otimes\mathcal{K})_+/\!\!\sim_{\rm R}$ with an addition, a $\R_+$-multiplication and an order as follows: $[a]_{\rm R}+[b]_{\rm R}:=[a\oplus b]_{\rm R}$, $r[a]_{\rm R}:=[ra]_{\rm R}$ and $[a]_{\rm R}\leq[b]_{\rm R}$ whenever $a\lesssim_{\rm R}b$. It can be checked that these are well-defined (e.g. from \autoref{thm:R09}) and turn $((A\otimes\mathcal{K})_+/\!\!\sim_{\rm R},+,\leq)$ into an ordered cone, that we denote $\mathcal{R}(A)$. It can also be checked that for any *-homomorphism $\phi\colon A\rightarrow B$ between $\Ca$-algebras, the map $\mathcal{R}(\phi)\colon \mathcal{R}(A)\rightarrow \mathcal{R}(B)$ sending $[a]_{\rm R}\rightarrow [\phi\otimes \id_{\mathcal{K}}(a)]_{\rm R}$ is a well-defined ordered cone morphism. From \autoref{thm:R09}, we see that the map $\iota\colon\mathcal{R}(A) \rightarrow  \Lsc_{\mathcal{C}}(\TT(A))$ sending $[a]_{\rm R}\mapsto\overline{a}$, is a well-defined order-embedding of cones, preserving suprema of increasing sequences and the compact-containment relation. Henceforth, we can identify elements of $\mathcal{R}(A)$ with elements of $ \Lsc_{\mathcal{C}}(\TT(A))$.

The following proposition, based on \cite[Remark 5.14]{ERS11}, shows that we get a well-defined continuous functor $\mathcal{R}\colon \Ca\rightarrow\Cu_{\mathcal{C}}$. 

\begin{prop}\label{prop:basisR}
Let $A$ be a $\Ca$-algebra. Then $\mathcal{R}(A)$ is a $\Cu$-cone and $(A_+/\!\!\sim_{\rm R},+,\leq)$ is $\ll$-dense in $\mathcal{R}(A)$.
\end{prop}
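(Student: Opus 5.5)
The plan is to identify $\mathcal{R}(A)$, through the order-embedding $\iota$, with a sub-d-cone of $\Lsc_{\mathcal{C}}(\TT(A))$. We then transport the known structure of $\Cu(A)$ along a natural surjection. Concretely, since $a\lesssim_{\rm Cu} b$ implies $\tau(a)\le \tau(b)$ for every $\tau\in\TT(A)$ (lower-semicontinuous traces factor through dimension functions $d_\tau$ on the Cuntz semigroup), \autoref{thm:R09} shows that the assignment $[a]_{\rm Cu}\mapsto [a]_{\rm R}$ is not the right one, because $\tau(a)$ is not $d_\tau(a)$. So we work directly with $\sim_{\rm R}$.

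\textbf{Existence of suprema.} Take an increasing sequence $[a_n]_{\rm R}$ in $\mathcal{R}(A)$. By \autoref{thm:R09} this means $\overline{a_n}\le \overline{a_{n+1}}$ pointwise on $\TT(A)$. We will build a single $a\in (A\otimes\mathcal{K})_+$ with $\overline{a}=\sup_n \overline{a_n}$.

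Following \cite[Remark 5.14]{ERS11}, first replace $a_n$ by cut-downs. Using $\lesssim_{\rm R}$, for each $n$ and each $\epsilon$ there is $\delta$ with $(a_n-\epsilon)_+\lesssim_{\rm CP}(a_{n+1}-\delta)_+$. This gives an increasing sequence $b_k=(a_{n_k}-\epsilon_k)_+$ with $b_k\lesssim_{\rm CP} b_{k+1}'\le (a_{n_{k+1}}-\epsilon_{k+1})_+$ and $\sup_k\overline{b_k}=\sup_n\overline{a_n}$. The latter holds because $\tau(a)=\sup_\epsilon \tau((a-\epsilon)_+)$ by lower semicontinuity.

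Next, write the differences as $\overline{b_{k+1}}-\overline{b_k}$, realized by Cuntz--Pedersen: $b_k\sim_{\rm CP} b_k'\le b_{k+1}$ gives $c_k:=b_{k+1}-b_k'\ge 0$. Set $a:=b_1\oplus \bigoplus_k 2^{-k}\|c_k\|^{-1}\cdots$. More precisely, take $a=\sum_k e_{kk}\otimes c_k'$ in $A\otimes\mathcal{K}$, where $c_0=b_1$ and $c_k'$ is a norm-summable rescaling handled by using $\sim_{\rm CP}$-decompositions $c_k=\sum_j x_jx_j^*$ into small pieces. This makes the orthogonal sum converge in norm while the traces add: $\overline{a}=\sum_k\overline{c_k}=\sup_k\overline{b_k}$.

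The \textbf{main obstacle} is exactly this norm-convergence issue. The traces of the pieces must be preserved while norms are made summable, and the point is that $\sim_{\rm CP}$ permits splitting $c=\sum x_jx_j^*$ into pieces $x_j^*x_j$ of arbitrarily small norm, placed on orthogonal diagonal blocks of $\mathcal{K}$. I would first try to split $c$ into $N$ copies of $c/N$, which are $\sim_{\rm CP}$ to $c$ on orthogonal blocks.

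\textbf{Remaining Cu axioms.} Addition is compatible with suprema because $\iota$ is additive and pointwise suprema are additive. Scalar multiplication is compatible with order and suprema since $\overline{ra}=r\overline{a}$. For $\ll$-density, we show $[(a-\epsilon)_+]_{\rm R}\ll[a]_{\rm R}$. Indeed, if $\overline{a}\le\sup\overline{b_n}$, then $(a-\epsilon)_+\lesssim_{\rm R}$-compactness follows from Robert's comparison via the characterization of $\ll$ in $\Lsc_{\mathcal{C}}$: $f\ll g$ when $f\le (1-\delta)g$ and $f$ is continuous where $g$ is finite (cf. the definition of $\LL(\TT(A))$).

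Then every $[a]_{\rm R}=\sup_n[(a-1/n)_+]_{\rm R}$ is a supremum of a $\ll$-increasing sequence. Moreover, for $a\in (A\otimes\mathcal{K})_+$ the cut-downs can be approximated by elements of $M_n(A)_+$, hence of $A_+$ up to $\sim_{\rm R}$ by compressing $M_n(A)$ into a corner isomorphic via $\sim_{\rm CP}$. This yields both continuity and the density of $A_+/\!\!\sim_{\rm R}$. Additivity of $\ll$ follows from $(a\oplus b-\epsilon)_+=(a-\epsilon)_+\oplus(b-\epsilon)_+$.
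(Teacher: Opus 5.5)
The genuine gap is the compact-containment step $[(a-\epsilon)_+]_{\mathrm R}\ll[a]_{\mathrm R}$, on which (O2), (O3) and the $\ll$-density all rest.

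The criterion you invoke is this: if $f\le(1-\delta)g$ and $f$ is continuous wherever $g<\infty$, then $f\ll g$. It is not part of the definition of $\LL(\TT(A))$. That definition only says which functions lie in $\LL$ and gives no information about $\ll$. In the generality you state it, the criterion is a Dini-type theorem whose proof needs the compactness of $(\TT(A),\tau_{iv})$. The paper takes exactly this fact, $\overline{(a-\epsilon)_+}\ll\overline{a}$, from \cite[Proposition 5.3]{ERS11}.

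You also never check the criterion's hypotheses for $f=\overline{(a-\epsilon)_+}$ and $g=\overline{a}$. One needs $(a-\epsilon)_+\le(1-\epsilon/\|a\|)a$, and continuity of $\tau\mapsto\tau((a-\epsilon)_+)$ at traces with $\tau(a)<\infty$. As written, then, the proposal does not show that $\mathcal{R}(A)$ is a $\Cu$-semigroup. The paper sidesteps all of this: it identifies $\mathcal{R}(A)$ with $\LL(\TT(A))$ via \cite[Theorem 3.1]{R09} and imports the $\Cu$-cone structure and the density from \cite[Theorem 5.12]{ERS11}.

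Your hands-on construction of suprema is a genuinely different route, and it can close the gap without any topology. The splitting $e_{11}\otimes c\sim_{\mathrm{CP}}\bigoplus_{j=1}^{N}c/N$, via $x_j=e_{1j}\otimes(c/N)^{1/2}$, does make the block-diagonal element $s$ converge. Block norms tending to $0$ suffice; no summability is needed. Lower semicontinuity and monotonicity of traces then give $\overline{s}=\sup_k\overline{b_k}$.

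Now suppose $[a]_{\mathrm R}\le\sup_n[a_n]_{\mathrm R}=[s]_{\mathrm R}$. Then $a\lesssim_{\mathrm R}s$ gives $\delta>0$ with $(a-\epsilon)_+\lesssim_{\mathrm{CP}}(s-\delta)_+$. Only finitely many blocks of $s$ have norm exceeding $\delta$. Hence $\overline{(a-\epsilon)_+}\le\overline{(s-\delta)_+}\le\overline{b_{K+1}}\le\overline{a_{n_{K+1}}}$ for some $K$, which is precisely $[(a-\epsilon)_+]_{\mathrm R}\ll[a]_{\mathrm R}$.

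A few smaller repairs are needed:
\begin{enumerate}[(i)]
\item The cut-downs must be chosen diagonally. Besides the chain condition, require $\epsilon_k\le 1/k$ and $(a_j-1/k)_+\lesssim_{\mathrm{CP}}(a_k-\epsilon_k)_+$ for all $j\le k$. Otherwise $\sup_k\overline{b_k}\ge\overline{a_n}$ is not guaranteed.
\item For density, use that $\|a-a'\|<\epsilon$ gives $(a-\epsilon)_+=da'd^*$ with $\|d\|\le1$, hence $(a-\epsilon)_+\lesssim_{\mathrm{CP}}a'$. Then use $b\sim_{\mathrm{CP}}e_{11}\otimes\sum_i b_{ii}$ for $b\in M_n(A)_+$. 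There is in general no corner embedding of $M_n(A)$ into $A$.
\item Drop the opening claim that $a\lesssim_{\Cu}b$ implies $\tau(a)\le\tau(b)$. It is false, since $2b\sim_{\Cu}b$.
\end{enumerate}
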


\begin{proof}
The explicit description $\LL(\TT(A))=\{\overline{a}\mid a\in (A\otimes\mathcal{K})_+\}$ given in \cite[Theorem 3.1]{R09}, together with \cite[Theorem 5.12]{ERS11} yield the desired result.
\end{proof}

A direct consequence of the above is the following lemma. 

\begin{cor}\label{cor:triangle} 
There is a functorial equivalence between $\mathcal{R}\simeq \LL(\TT(\_))$, where $\LL(\TT(\_))\colon \Ca\rightarrow \Cu_\mathcal{C}$ is the continuous functor of \cite[\S 5]{ERS11}. 

As a consequence, $\mathcal{R}\colon \Ca\rightarrow\Cu_{\mathcal{C}}$ is a well-defined continuous functor. 
\end{cor}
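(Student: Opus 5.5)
The plan is to build, for each $\Ca$-algebra $A$, a natural isomorphism of $\Cu$-cones $\eta_A\colon\mathcal{R}(A)\rightarrow \LL(\TT(A))$ using the embedding $\iota$, and then to transport continuity of the functor from $\LL(\TT(\_))$ to $\mathcal{R}$.

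First, fix $A$ and recall from the discussion above that $\iota\colon\mathcal{R}(A)\rightarrow\Lsc_{\mathcal{C}}(\TT(A))$, $[a]_{\rm R}\mapsto\overline{a}$, is a well-defined order-embedding of cones. By \autoref{thm:R09}(i), $a\lesssim_{\rm R}b$ holds exactly when $\overline{a}\leq\overline{b}$ pointwise. The explicit description $\LL(\TT(A))=\{\overline{a}\mid a\in(A\otimes\mathcal{K})_+\}$ from \cite[Theorem 3.1]{R09}, already used in the proof of \autoref{prop:basisR}, shows that the image of $\iota$ is exactly $\LL(\TT(A))$. So $\iota$ corestricts to a bijective order-isomorphism of ordered cones $\eta_A\colon\mathcal{R}(A)\rightarrow\LL(\TT(A))$. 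An order-isomorphism automatically preserves suprema of increasing sequences and the compact-containment relation, since both are defined purely from the order. Hence $\eta_A$ is an isomorphism in $\Cu_{\mathcal{C}}$, which is consistent with \autoref{prop:basisR}.

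Second, check naturality. Take a $*$-homomorphism $\phi\colon A\rightarrow B$. The map $\LL(\TT(\phi))$ of \cite[\S 5]{ERS11} is induced by the dual map $\TT(\phi)\colon\TT(B)\rightarrow\TT(A)$, $\tau\mapsto\tau\circ(\phi\otimes\id_{\mathcal{K}})$, by sending $f\mapsto f\circ\TT(\phi)$. Evaluating at $\tau\in\TT(B)$ gives
\[ \overline{a}\circ\TT(\phi)(\tau)=\tau\big((\phi\otimes\id_{\mathcal{K}})(a)\big)=\overline{(\phi\otimes\id_{\mathcal{K}})(a)}(\tau). \]
Therefore $\eta_B\circ\mathcal{R}(\phi)=\LL(\TT(\phi))\circ\eta_A$. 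Functoriality of $\mathcal{R}$, namely identities and compositions, also follows directly from the formula $[a]_{\rm R}\mapsto[\phi\otimes\id_{\mathcal{K}}(a)]_{\rm R}$. In addition, each $\mathcal{R}(\phi)=\eta_B^{-1}\circ\LL(\TT(\phi))\circ\eta_A$ is a $\Cu$-cone morphism.

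Finally, continuity. By \cite[\S 5]{ERS11}, the functor $\LL(\TT(\_))$ sends inductive limits of $\Ca$-algebras to inductive limits in $\Cu_{\mathcal{C}}$. A functor naturally isomorphic to a continuous functor is itself continuous, because the cocone images are isomorphic and universal properties are preserved under isomorphism. So $\mathcal{R}$ is a well-defined continuous functor.

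The main obstacle I expect is the naturality step. Its success depends on identifying precisely how \cite{ERS11} defines $\LL(\TT(\phi))$, in particular through the extension of traces to $A\otimes\mathcal{K}$ and the identification $\TT(A\otimes\mathcal{K})\simeq\TT(A)$. Once that is pinned down, the computation above is immediate.
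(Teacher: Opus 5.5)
Your proposal is correct and follows essentially the paper's route. The paper also identifies $\mathcal{R}(A)$ with $\LL(\TT(A))$ through the order-embedding $\iota$ and Robert's description $\LL(\TT(A))=\{\overline{a}\mid a\in(A\otimes\mathcal{K})_+\}$ from \cite[Theorem 3.1]{R09} (with \cite[Theorem 5.12]{ERS11}), and then inherits continuity from the functor of \cite[\S 5]{ERS11}; you simply make explicit the naturality check and the transfer of continuity along a natural isomorphism, which the paper leaves implicit.
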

 \end{prg}

\begin{rmk}[{The $\triangleleft$ relation}]\label{rmk:triangle}
From the results of \cite{ERS11}, we have that $(A_+/\!\!\sim_{\rm R},+,\leq)$ is in fact $\triangleleft$-dense in $\mathcal{R}(A)$, where $\triangleleft$ is a strengthening of $\ll$, introduced in \cite{R13}, given by $f\triangleleft g$ in $\Lsc_{\mathcal{C}}(\TT(A))$, if $f\leq (1-\epsilon)g$ for some $\epsilon >0$ and $f$ is $\tau_{iv}$-continuous at each point where $g$ is finite.
\end{rmk}

\begin{prg}\textbf{Tracial Reflexiveness and its permanence properties}. In the general case, it is not clear whether the order-embedding $\iota\colon\mathcal{R}(A) \lhook\joinrel\rightarrow  \Lsc_{\mathcal{C}}(\TT(A))$ is a $\Cu_\mathcal{C}$-isomorphism. Even more so, it is not clear whether  $\Lsc_{\mathcal{C}}(\TT(A))$ is a $\Cu$-semigroup at all. (It is not known whether it satisfies axiom (O2).) This leads us to introduce the notion of \emph{tracially reflexive} $\Ca$-algebras as follows.

\begin{dfn}[{Tracial reflexiveness}]  
We say that a $\Ca$-algebra $A$ is \emph{tracially reflexive}, if $\iota\colon\mathcal{R}(A) \simeq  \Lsc_{\mathcal{C}}(\TT(A))$ is a $\Cu$-cone isomorphism. 

Equivalently, $\{\overline{a} \mid  a\in (A\otimes\mathcal{K})_+\}=\Lsc_{\mathcal{C}}(\TT(A))$. 
\end{dfn}

We now tackle permanence properties that tracial reflexiveness may have. We are particularly interested to see that tracial reflexiveness is preserved under inductive limits. 

\begin{thm} Tracial reflexiveness is preserved under passage to matrices, stabilization, (finite) direct sums and inductive limits.
\end{thm}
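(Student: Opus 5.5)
The idea is to exploit that both sides of $\iota\colon\mathcal{R}(A)\hookrightarrow\Lsc_{\mathcal{C}}(\TT(A))$ depend only on $\TT(A)$ and on the stabilization $A\otimes\mathcal{K}$, and then to check how each side behaves under the constructions in the statement.

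\emph{Matrices and stabilization.} We have $\TT(M_n(A))\simeq\TT(A)\simeq\TT(A\otimes\mathcal{K})$ as topological cones, and these identifications are compatible with evaluation. Moreover $M_n(A)\otimes\mathcal{K}\simeq A\otimes\mathcal{K}\simeq(A\otimes\mathcal{K})\otimes\mathcal{K}$. Hence the sets $\{\overline{a}\}$ of evaluation functions coincide for the three algebras, and so do the codomains $\Lsc_{\mathcal{C}}(\TT(\cdot))$. Tracial reflexiveness therefore passes between $A$, $M_n(A)$ and $A\otimes\mathcal{K}$ in both directions.

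\emph{Finite direct sums.} First identify $\TT(A\oplus B)\simeq\TT(A)\times\TT(B)$ via $\tau\mapsto(\tau|_A,\tau|_B)$. I would check that the interval topology matches the product topology directly from the defining inequalities, using $(a\oplus b-\epsilon)_+=(a-\epsilon)_+\oplus(b-\epsilon)_+$. A cone morphism $f$ on a product cone satisfies $f(\sigma,\rho)=f(\sigma,0)+f(0,\rho)$, so $\Lsc_{\mathcal{C}}(\TT(A\oplus B))\simeq\Lsc_{\mathcal{C}}(\TT(A))\times\Lsc_{\mathcal{C}}(\TT(B))$. Lower semicontinuity of the two restrictions follows by composing with the continuous inclusions. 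On the other side, $\mathcal{R}(A\oplus B)\simeq\mathcal{R}(A)\times\mathcal{R}(B)$, because $(A\oplus B)\otimes\mathcal{K}\simeq (A\otimes\mathcal{K})\oplus(B\otimes\mathcal{K})$ and $\overline{a\oplus b}=\overline{a}+\overline{b}$. Surjectivity of $\iota$ for the sum then reduces to surjectivity for each summand.

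\emph{Inductive limits.} Let $A=\varinjlim A_i$ with each $A_i$ tracially reflexive. By \autoref{cor:triangle}, $\mathcal{R}(A)\simeq\varinjlim\mathcal{R}(A_i)$ in $\Cu_{\mathcal{C}}$. By the continuity of $\TT$ established in \cite{MR23}, $\TT(A)\simeq\varprojlim\TT(A_i)$ in $\mathcal{C}_{\rm Choq}$. The plan is to take $f\in\Lsc_{\mathcal{C}}(\TT(A))$ and show $f$ is a supremum of an increasing sequence of functions of the form $\overline{a_n}$. Such a supremum lies in $\mathcal{R}(A)$, since $\iota$ is an order-embedding preserving suprema and $\mathcal{R}(A)$ is a $\Cu$-cone. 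The approximating functions would come from the functions $f\circ\mu_i^*$ pulled back along the limit maps.

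I expect the main obstacle to be this approximation step: showing that every lower-semicontinuous linear function on a projective limit of extended Choquet cones is a supremum of functions factoring through the finite stages, with each factored function lower-semicontinuous on $\TT(A_i)$. I would first try to show that
\[
\{g\circ\mu_i^*\mid i,\ g\in\Lsc_{\mathcal{C}}(\TT(A_i))\}
\]
is sup-dense in $\Lsc_{\mathcal{C}}(\TT(A))$. For this I would use compactness of $\TT(A)$ and the fact that open sets of the projective limit are unions of pullbacks of open sets. This should give a Dini/Edwards-type separation argument, in the style of the Riesz interpolation argument of \cite{ERS11}, approximating $f$ from below by continuous linear functions each depending on finitely many stages. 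Once this is done, the reflexiveness of each $A_i$ turns every $g$ into some $\overline{a}$, which finishes the proof.
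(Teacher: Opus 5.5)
Your arguments for matrices, stabilization and finite direct sums are fine; the paper leaves these to the reader. The inductive-limit case, however, is not proved. Everything rests on the step you call the main obstacle, and the route you sketch for it does not work, for two reasons.

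\emph{Pointwise sup-density gives nothing.} By \autoref{cor:SS}, every $f\in\Lsc_{\mathcal{C}}(\TT(A))$ is the pointwise supremum of $\{\overline{a}\mid \overline{a}\leq f\}$. A routine cut-down approximation of $a$ by elements $\mu_i(c)$ then shows that $f$ is the pointwise supremum of functions $\overline{b}\circ\mu_i^*\leq f$ with $b\in(A_i\otimes\mathcal{K})_+$. (Use $(x-\epsilon)_+=dyd^*$ with $d$ a contraction whenever $\Vert x-y\Vert<\epsilon$.) This holds for \emph{every} inductive limit, with no hypothesis on the $A_i$. What you actually need is an increasing sequence, or at least a directed family whose supremum exists in $\mathcal{R}(A)$ and is computed pointwise. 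Note that $\mathcal{R}(A)$ is only closed under sequential suprema, and you have no separability. Passing from a pointwise supremum to such a sequence is essentially the whole content of Robert's question.

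\emph{Continuous minorants cannot work.} A Dini/Edwards separation argument producing continuous linear minorants cannot supply that sequence. Such minorants already lie in $C_{\mathcal{C}}(\TT(A))\subseteq\mathcal{R}(A)$, so the argument would never use the reflexiveness of the $A_i$. Worse, the approximation is false, already for the constant system $A_i=A=C([0,1])$, whose terms are reflexive by \autoref{thm:commutative}. Take $I=C_0((0,1))$. The idempotent $f_I$ is a nonzero element of $\Lsc_{\mathcal{C}}(\TT(A))$. Every $h\in C_{\mathcal{C}}(\TT(A))$ has a compact support ideal by \cite[\S 4]{MR23}, and $h\leq f_I$ gives $\varepsilon(h)\leq f_I$, i.e.\ $I_h\subseteq I$. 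Since the only compact ideals of $C([0,1])$ are $0$ and $A$, this forces $h=0$. So the approximants must remain genuinely lower-semicontinuous, and there separation arguments give you nothing.

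The paper never constructs approximating sequences. It applies $\Lsc_{\mathcal{C}}$ to $\TT(A)\simeq\varprojlim\TT(A_i)$ and asserts that the resulting cocone is the colimit in $\mathrm{DoM}$ (see \cite{F92}). By hypothesis $\mathcal{R}(A_i)\simeq\Lsc_{\mathcal{C}}(\TT(A_i))$. By \autoref{cor:triangle}, which you quote but never use, $\mathcal{R}(A)$ is the $\Cu_{\mathcal{C}}$-limit of the $\mathcal{R}(A_i)$. Hence $\mathcal{R}(A)$ is a cocone over the same system. The universal property then yields $\theta\colon\Lsc_{\mathcal{C}}(\TT(A))\to\mathcal{R}(A)$, inverse to $\iota$, and \cite[Lemma 4.7]{C21a} finishes.

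The ingredient missing from your plan is exactly this colimit identification for $\Lsc_{\mathcal{C}}$ of the projective limit of the cones $\TT(A_i)$; in your element-wise formulation you would need a sequential version of it. Without it, or an argument replacing it, your proposal does not establish the inductive-limit case.
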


\begin{proof} Passage to matrices, stabilization and (finite) direct sums are straightforward and left to the reader. 

Let $(A_i,\phi_{i,j})_i$ be an inductive system of tracially reflexive $\Ca$-algebra and we denote its limit in $\Ca$ by $(A,\phi_{i,\infty})_i$. Let us show that $A$ is tracially reflexive. 

First, by the continuity of the functor $\TT\colon \Ca\rightarrow \mathcal{C}_{\rm Choq}$, provided by \cite[Theorem 3.12]{ERS11}, we observe that $\underset{\leftarrow}{\lim} (\TT(A_i),\TT(\phi_{i,j}))\simeq (\TT(A),\TT(\phi_{i,\infty}))_i$. We then apply the contravariant functor  $\Lsc_\mathcal{C}\colon \mathcal{C}_{\rm Choq}\rightarrow \rm DoM$ to the above. We obtain an inductive system and a cocone in the category $\rm DoM$. It can be checked that the cocone is in fact the $\rm DoM$-colimit. That is,  $\mathrm{DoM}-\underset{\rightarrow}{\lim} \Lsc_\mathcal{C}[(\TT(A_i),\TT(\phi_{i,j}))_i]\simeq\Lsc_\mathcal{C}[(\TT(A),\TT(\phi_{i,\infty}))_i]$. See e.g. \cite{F92}.

On the other hand, we apply the continuous functor $\mathcal{R}$ to the inductive system in $\Ca$, and we obtain a $\Cu_\mathcal{C}$-limit isomorphic to $\mathcal{R}(A)$. Since the connecting maps and the building blocks of the inductive systems are the same, we get an ordered monoid morphism $\theta\colon \Lsc_\mathcal{C}(\TT(A))\rightarrow \mathcal{R}(A)$. We also know that there is an order-embedding $\iota\colon\mathcal{R}(A) \lhook\joinrel\rightarrow  \Lsc_{\mathcal{C}}(\TT(A))$. It readily follows from construction that these maps are inverse of one another. The result is now obtained from \cite[Lemma 4.7]{C21a}.
\end{proof}
\end{prg}

\section{The commutative case}

This section is dedicated to prove that (inductive limits of) commutative $\Ca$-algebras are tracially reflexive. The proof relies on arguments exposed in Keimel's work (see \cite{K17}), where he studied the Cuntz semigroup and the cone of traces of a $\Ca$-algebra, through the lenses of \emph{domain theory}. 

We highlight that these arguments have inspired the nomenclature of such $\Ca$-algebras.

\begin{prg}\textbf{Domain Theory and Cuntz semigroups}.\label{prg:Keimel} It has been observed by Keimel and Thiel that many facts around the Cuntz semigroup of a $\Ca$-algebra, including facts on the cone of lower-semicontinuous traces and its compact Hausdorff topology $\tau_{iv}$ (which coincides with the cone of functionals on the Cuntz semigroup, as we will recall subsequently), could be directly deduced from well-known results of \emph{domain theory}. We refer the reader to \cite{K17} and \cite{T17} for the close relationship between domain theory and the theory of $\Cu$-semigroups, and \cite{GHKLMS03} for an overview on domain theory. Below, we summarize briefly but thoroughly, the main bridges built in \cite{K17}, on which we walk across to conclude that any commutative $\Ca$-algebra is tracially reflexive.

A \emph{preCuntz semigroup} is a monoid $(M,+)$, equipped with a transitive, interpolating, additive binary relation $\pprec$. This relation naturally induces a topology $\tau_{\pprec}$. A \emph{round-ideal of $M$} is a $\pprec$-downward hereditary, $\pprec$-directed subset of $M$. E.g., $m^{\pprec}:=\{n \in M \mid n\pprec m\}$ is a round-ideal, for any $m\in M$. The set of round-ideals $\mathcal{RJ}(M)$ of a preCuntz semigroup $(M,+,\pprec)$, equipped with the inclusion order and the addition given by $I+J:=\bigcup_{a\in I,b\in J}(a+b)^{\pprec}$ becomes a $\Cu$-semigroup. Furthermore, $\{m^{\pprec}\}$ is a basis of $\mathcal{RJ}(M)$ and the canonical map $M\rightarrow \mathcal{RJ}(M)$ sending $m\mapsto m^{\pprec}$ is a $(\tau_{\pprec},\tau_{\rm Scott})$-continuous monoid morphism. (See \cite[\S 3]{K17}.)

The \emph{dual $M^*$ of a preCuntz semigroup $M$} is the set of lower-semicontinuous functions on $M$ that are also monoid morphisms, i.e. monoid morphisms $M\rightarrow [0,\infty]$ that are $(\tau_{\pprec},\tau_{\rm Scott})$-continuous. For the point-wise operations, $(M^*,+,\leq)$ is a d-cone, i.e. a d-monoid which is also a cone. Moreover, we can equip $M$ either with the \emph{weak*-upper topology} $\tau_{up}$ (i.e., the coarsest topology rendering lower-semicontinuous all the evaluation maps) or the \emph{interval topology} $\tau_{iv}$, which is a refinement of the former. (See \cite[Proposition 3.7]{K17}.)

The \emph{double dual $M^{**}$ of a preCuntz semigroup $M$} is the set of lower-semicontinuous functions on $(M^*,\tau_{up})$ that are also cone morphisms, i.e. cone morphisms $M^*\rightarrow [0,\infty]$ that are $(\tau_{up},\tau_{\rm Scott})$-continuous. 

Back to our setting, the cone of traces $\TT(A)$ of any $\Ca$-algebra $A$ can be seen as the dual $(A_+,+,\pprec)^*$ of the preCuntz semigroup $(A_+,+,\pprec)$, where $a\pprec b$ if there exists $\delta>0$ such that $a\lesssim_{\rm CP}(b-\delta)_+$. (See \cite[\S 4]{K17}. We highlight that have used our notation $\lesssim_{\rm CP}$ for consistency, whereas Keimel has sticked with the notations of \cite{R09}.)
\end{prg}

We are now ready to prove the main result of this section, based on domain theoretical arguments. Let us start with a lemma regarding $\Cu$-semigroups.

\begin{lma} Let $S,T$ be $\Cu$-semigroups and let $\alpha\colon S\rightarrow T$. Assume that there exists a $\ll$-dense subset $B$ of $ S$ such that 
\begin{itemize}
\item[(i)] The restriction $\alpha_{\mid B}\colon B\rightarrow T$ is an order-embedding.
\item[(ii)] The image $\alpha(B)$ is $\ll$-dense in $T$
\end{itemize}

Then $\alpha$ is a $\Cu$-isomorphism.
\end{lma}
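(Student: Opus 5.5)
The plan is to show that $\alpha$ is a surjective order-embedding. Throughout we read $\alpha$ as a $\Cu$-morphism: additive, order-preserving, preserving suprema of increasing sequences and preserving $\ll$. We read ``$B$ is $\ll$-dense in $S$'' as saying that every $s\in S$ is the supremum of a $\ll$-increasing sequence of elements of $B$. Equivalently, for every $s'\ll s$ there is $b\in B$ with $s'\leq b\ll s$. The same reading applies to $\alpha(B)$ in $T$. Once we have a bijective order-embedding, the inverse $\alpha^{-1}$ is an order-isomorphism. It is additive, and it preserves suprema of increasing sequences and the relation $\ll$, because both are defined purely in terms of the order. Hence $\alpha$ is a $\Cu$-isomorphism.

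\emph{Order-embedding.} Let $s,s'\in S$ with $\alpha(s)\leq \alpha(s')$; we show $s\leq s'$. Write $s'=\sup_m b'_m$ with $b'_m\in B$ increasing. Since $s$ is the supremum of the elements of $B$ that are $\ll s$, it suffices to show $b\leq s'$ for every $b\in B$ with $b\ll s$.
\begin{itemize}
\item[(a)] Interpolate to get $b\ll b''\ll s$ with $b''\in B$, using $\ll$-density.
\item[(b)] Since $\alpha$ preserves $\ll$, we get $\alpha(b)\ll\alpha(b'')\leq \alpha(s)\leq\alpha(s')$.
\item[(c)] Since $\alpha$ preserves suprema, $\alpha(s')=\sup_m\alpha(b'_m)$, so $\alpha(b)\leq \alpha(b'_m)$ for some $m$.
\item[(d)] Hypothesis (i) then gives $b\leq b'_m\leq s'$.
\end{itemize}
Taking the supremum over all such $b$ yields $s\leq s'$, so $\alpha$ is an order-embedding on all of $S$, and in particular injective.

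\emph{Surjectivity.} Let $t\in T$. By hypothesis (ii), we can write $t=\sup_n \alpha(c_n)$ with $c_n\in B$ and $\alpha(c_n)\ll\alpha(c_{n+1})$. In particular $\alpha(c_n)\leq\alpha(c_{n+1})$, so hypothesis (i) gives $c_n\leq c_{n+1}$. Hence $(c_n)_n$ is increasing and $s:=\sup_n c_n$ exists in $S$. Since $\alpha$ preserves suprema of increasing sequences, $\alpha(s)=\sup_n\alpha(c_n)=t$.

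The main point requiring care is the order-embedding step. Hypothesis (i) only controls elements of $B$, so it must be transferred to arbitrary elements through the $\ll$-approximation argument above. This transfer is exactly where preservation of $\ll$ and of suprema by $\alpha$ is used. If the intended notion of $\ll$-density is the weaker ``every element is a supremum of an increasing sequence from $B$'', the first step needs adjusting. In that case I would first upgrade it, via interpolation and the continuity of $S$ and $T$, to the $\ll$-increasing form used above.
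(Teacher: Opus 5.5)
Your proof is correct and follows essentially the same route as the paper: you approximate by $\ll$-increasing sequences from $B$ and use that $\alpha$ preserves $\ll$ and suprema, together with (i), to get the order-embedding, then lift a $\ll$-increasing sequence in $\alpha(B)$ for surjectivity. You are slightly more careful than the paper, since you use (i) to check that the lifted sequence $(c_n)_n$ is increasing, which the paper takes for granted; your interpolation step (a) is harmless but unnecessary, because $b\ll s$ already gives $\alpha(b)\ll\alpha(s)\leq\alpha(s')$.
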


\begin{proof} Let $x,y\in S$ be such that $\alpha(x)\leq \alpha(y)$. Since $B$ is $\ll$-dense, we can find $\ll$-increasing sequences $(b_n)_n,(c_n)_n$ of elements of $B$ whose respective suprema are $x,y$. Applying $\alpha$, we obtain $\ll$-increasing sequences $\alpha(b_n)_n,\alpha(c_n)_n$ whose respective suprema are $\alpha(x),\alpha(y)$. Therefore, for any $n\in \N$, there exists $m\in \N$ such that $\alpha(b_n)\ll \alpha(b_m)$. Since $\alpha_{\mid B}$ is an order-embedding, we deduce that for any $n\in \N$, there exists $m\in \N$ such that $b_n\leq b_m$. Taking suprema on the right side first, and left side then, we conclude that $x\leq y$ and hence, $\alpha$ is an order-embedding. 

Next, take $t\in T$. There exists a $\ll$-increasing sequence $(b_n)_n$ of elements of $B$ such that $t=\sup\alpha(b_n)$. Write $s:=\sup b_n$. Since $\alpha$ preserves suprema of increasing sequences, we deduce that $\alpha(s)=t$ and hence, $\alpha$ is a $\Cu$-isomorphism.
\end{proof}

\begin{thm}\label{thm:commu} Let $A$ be a $\Ca$-algebra. 
There is an explicit $\Cu$-cone isomorphism $\mathfrak{r}\colon\mathcal{R}(A)\simeq \mathcal{RJ}(A)$ sending $[a]_{\rm R}\mapsto a^{\pprec}$, for any $a\in A_+$.
\end{thm}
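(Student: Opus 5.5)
We will use the preceding lemma with $S=\mathcal{RJ}(A)$, $T=\mathcal{R}(A)$ (or the other direction) and $B=\{a^{\pprec}\mid a\in A_+\}$. Since the lemma produces a map $\alpha$ defined on all of $S$, it is convenient to build the map $\mathcal{RJ}(A)\to\mathcal{R}(A)$ first and then invert it. For a round ideal $I$ of $(A_+,+,\pprec)$, set $\alpha(I):=\sup\{[a]_{\rm R}\mid a\in I\}$. This supremum exists in the $\Cu$-semigroup $\mathcal{R}(A)$ because $I$ is $\pprec$-directed, and $a\pprec b$ implies $[a]_{\rm R}\leq[b]_{\rm R}$. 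In the separable case we extract a cofinal $\pprec$-increasing sequence; in general we work with directed suprema, which exist by \autoref{prop:basisR} and the identification with $\LL(\TT(A))$.

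\textbf{Step 1: $\alpha$ is a $\Cu$-morphism.} Additivity follows from the definition $I+J=\bigcup(a+b)^{\pprec}$ and from $[a]_{\rm R}+[b]_{\rm R}=[a+b]_{\rm R}$. The latter holds because $a\oplus b\sim_{\rm CP}a+b$, so they have equal traces. Preservation of directed suprema is immediate from the definition. For $\ll$ we use the description of $\ll$ in $\mathcal{RJ}$: $I\ll J$ iff $I\subseteq b^{\pprec}$ for some $b\in J$. It then suffices to check $[a]_{\rm R}\ll[b]_{\rm R}$ whenever $a\pprec b$. If $a\lesssim_{\rm CP}(b-\delta)_+$, then $\overline{a}\leq\overline{(b-\delta)_+}$, and $\overline{(b-\delta)_+}\ll\overline{b}$ holds in $\mathcal{R}(A)\cong\LL(\TT(A))$. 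This is the standard fact $[(b-\delta)_+]\ll[b]$ transported through \autoref{cor:triangle}, or equivalently the $\triangleleft$-relation of \autoref{rmk:triangle}.

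\textbf{Step 2: order-embedding on $B$.} We need $a^{\pprec}\subseteq b^{\pprec}$ if and only if $a\lesssim_{\rm R}b$. By \autoref{thm:R09}(i), $\alpha(a^{\pprec})=\sup_{\epsilon}[(a-\epsilon)_+]_{\rm R}=[a]_{\rm R}$. This uses $(a-\epsilon)_+\pprec a$ and $(a-\epsilon)_+\to a$, together with lower semicontinuity of traces. So the task is to show that $a\lesssim_{\rm R}b$ implies $a^{\pprec}\subseteq b^{\pprec}$. Take $c\pprec a$, say $c\lesssim_{\rm CP}(a-\delta)_+$. Apply the Robert relation with $\epsilon=\delta$ to get $\eta>0$ such that $(a-\delta)_+\lesssim_{\rm CP}(b-\eta)_+$. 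By transitivity of $\lesssim_{\rm CP}$, which one checks directly from the definitions (sums of $x_nx_n^*$ and hereditary subalgebra arguments, as in \cite{CP79}), we get $c\lesssim_{\rm CP}(b-\eta)_+$, i.e. $c\pprec b$. The converse direction follows by applying $\alpha$, which is order-preserving.

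\textbf{Step 3: density.} $\alpha(B)=\{[a]_{\rm R}\mid a\in A_+\}=A_+/\!\!\sim_{\rm R}$, which is $\ll$-dense in $\mathcal{R}(A)$ by \autoref{prop:basisR}. $B$ is $\ll$-dense in $\mathcal{RJ}(A)$ by \cite[\S 3]{K17}. The lemma then gives that $\alpha$ is a $\Cu$-isomorphism, and $\mathfrak{r}:=\alpha^{-1}$ sends $[a]_{\rm R}\mapsto a^{\pprec}$. Compatibility with scalar multiplication holds on $B$, since $(ra)^{\pprec}=r\,a^{\pprec}$, and passes to suprema, so $\mathfrak{r}$ is a $\Cu$-cone isomorphism.

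The main obstacle is Step 1's compact-containment claim and, more subtly, non-separability. $\mathcal{R}(A)$ is only known to admit suprema of increasing sequences, while round ideals of $A_+$ need not be countably generated. I would handle this by showing each round ideal is the directed union of countably generated round ideals, or else restrict the lemma to its sequential formulation via the $\ll$-dense basis, where only sequences are needed.
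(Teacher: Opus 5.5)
Your argument follows the paper's: the same lemma applied to $B=\{a^{\pprec}\mid a\in A_+\}$, with the same two inputs. These are $\overline{(b-\delta)_+}\ll\overline{b}$ (the precise reference is \cite[Proposition 5.3]{ERS11}, not a transport of the Cuntz-semigroup fact) and $a^{\pprec}\subseteq b^{\pprec}\iff a\lesssim_{\rm R}b$. The differences are that you write the comparison map explicitly as $I\mapsto\sup\{[a]_{\rm R}\mid a\in I\}$ instead of extracting it from the universal property \cite[Corollary 3.3]{K17}, and that you prove the key equivalence directly (Robert's relation with $\epsilon=\delta$ plus transitivity of $\lesssim_{\rm CP}$) instead of quoting the discussion after \cite[Proposition 4.1]{K17}. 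Both are fine.

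The genuine gap is your claim that, for general $A$, directed suprema exist in $\mathcal{R}(A)$ ``by \autoref{prop:basisR} and the identification with $\LL(\TT(A))$''. That identification only provides suprema of increasing sequences, and the claim is false. Neither remedy in your last paragraph can repair this, because the statement itself fails without separability. Let $A=c_0(X)$ with $X$ uncountable and discrete. Since $A$ is commutative, $\lesssim_{\rm CP}$ is just $\leq$. Hence $I:=\{a\in A_+\mid a \text{ has finite support}\}$ is a round ideal, and $\{[a]_{\rm R}\mid a\in I\}$ is upward directed. Let $\delta_y\in I$ be the indicator of $\{y\}$ and $\tau_y\colon c\mapsto \mathrm{Tr}(c(y))$. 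An upper bound $[c]_{\rm R}$ with $c\in c_0(X,\mathcal{K})_+$ would need $\mathrm{Tr}(c(y))\geq\tau_y(\delta_y)=1$ for every $y\in X$, yet $c$ vanishes off a countable set. So this directed family has no upper bound in $\mathcal{R}(A)$, whereas $\{a^{\pprec}\mid a\in I\}$ has supremum $I$ in $\mathcal{RJ}(A)$. Therefore no order-isomorphism with $[a]_{\rm R}\mapsto a^{\pprec}$ exists. Your sequential variant fails for the same reason: $I$ is not the supremum of any sequence from $B$, so the density hypothesis of the lemma is not met. The paper's proof has the same hole, hidden in the universal property (which needs a directed-complete codomain) and in the sequential lemma. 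What is needed is separability of $A$, or replacing $\mathcal{RJ}(A)$ by countably generated round ideals. For separable $A$, $\mathcal{R}(A)$ is countably based, so upward-directed subsets have suprema \cite[Lemma 2.6]{APRT22} and compact containment can be tested against them; the paper uses exactly this at the end of \S 4. With that input, your Steps 1--3 go through, running the lemma over directed families rather than sequences.
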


\begin{proof}
Consider the canonical cone morphism $\pi\colon A_+\rightarrow \mathcal{R}(A)$ sending $a\mapsto [a]_{\rm R}$. Let us show that $\pi$ is a $(\tau_{\pprec},\tau_{\ll})$-continuous morphism preserving the preCuntz relations. (Here, $(\mathcal{R}(A),+,\ll)$ is considered as a preCuntz semigroup, and its induced topology $\tau_{\ll}$ is exactly the Scott topology.) 

By \cite[Proposition 5.3]{ERS11}, we know that $\overline{(a-\epsilon)}_+\ll \overline{a}$ for any $a\in (A\otimes \mathcal{K})_+$ and any $\epsilon >0$. Equivalently, $[(a-\epsilon)_+]_{\rm R}\ll [a]_{\rm R}$. On the other hand, it is easily seen that for $a,b\in A_+$ such that $a\pprec b$, there exists $\epsilon>0$ such that $a\pprec (b-\epsilon)_+\pprec b$. (Take any $\epsilon < \delta$.) We deduce that $\pi(a)\leq \pi(b-\epsilon)\ll \pi(b)$. Henceforth,  $\pi$ preserves the preCuntz relations.
Now, let $a\in A_+$ and $r\in \mathcal{R}(A)$ be such that $r\ll \pi(a)$. Again, by \cite[Proposition 5.3]{ERS11}, we can find some $\epsilon>0$ such that $r\ll\pi((a-\epsilon)_+)\ll \pi(a)$. Therefore, we have found some $b:=(a-\epsilon)_+$ such that $b\pprec a$ and such that $r\ll\pi (b)\ll \pi(a)$. We conclude that $\pi$ is $(\tau_{\pprec},\tau_{\ll})$-continuous, by the characterization given in \cite[\S 2.7]{K17}. 

As a consequence, we can invoke the universal property of the round-ideal completion (see \cite[Corollary 3.3]{K17}) stating that there exists a unique Scott-continuous monoid morphism $\mathfrak{r}\colon \mathcal{RJ}(A)\rightarrow \mathcal{R}(A)$ preserving the compact-containment relation, i.e. a $\Cu$-morphism, such that $\mathfrak{r}\circ \iota^{\pprec}=\pi$, where $\iota^{\pprec}\colon A_+\rightarrow \mathcal{RJ}(A)$ is the canonical morphism sending $a\mapsto a^{\pprec}$. 

Let us show that $\mathfrak{r}$ is an isomorphism, using our lemma. From construction, we know that the subset $\{a^{\pprec}\mid a\in A_+\}$ is $\ll$-dense in $\mathcal{RJ}(A)$ and by \autoref{prop:basisR}, we also know that the subset $\{[a]_{\rm R}\mid a\in A_+\}$ is $\ll$-dense in $\mathcal{R}(A)$. 
As explained in the discussion after \cite[Proposition 4.1]{K17}, for any $a,b\in A_+$, we have that $a^{\pprec}\subseteq b^{\pprec}$ if and only if $a\lesssim_{\rm R} b$. Together with the fact that $\mathfrak{r}\circ\iota^{\pprec}=\pi$, we obtain that $\mathfrak{r}$ and $\{a^{\pprec}\mid a\in A_+\}$ satisfy both conditions of the above lemma. 

Finally, it follows from construction that $\mathfrak{r}$ preserves scalar multiplication by positive real numbers.
We  conclude that $\mathfrak{r}\colon \mathcal{RJ}(A)\simeq \mathcal{R}(A)$ sending $a^{\pprec}\mapsto [a]_{\rm R}$ is a $\Cu$-cone isomorphism.
\end{proof}

\begin{thm}\label{thm:commutative}
Any commutative $\Ca$-algebra is tracially reflexive.
\end{thm}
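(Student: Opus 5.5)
The plan is to route the problem through Keimel's domain-theoretic picture and then quote his answer to \cite[Question 4.4]{K17} in the commutative case. By \autoref{thm:commu} we already have a $\Cu$-cone isomorphism $\mathfrak{r}\colon \mathcal{R}(A)\simeq \mathcal{RJ}(A)$, $[a]_{\rm R}\mapsto a^{\pprec}$. By \autoref{prg:Keimel}, $\TT(A)=(A_+,+,\pprec)^*$, so $\Lsc_{\mathcal{C}}(\TT(A))$ is, up to the choice of topology on $\TT(A)$, the double dual $(A_+,+,\pprec)^{**}$. Consequently, tracial reflexiveness of $A$ amounts to two facts. First, the canonical map $\mathcal{RJ}(A_+)\to (A_+)^{**}$, $a^{\pprec}\mapsto(\tau\mapsto\tau(a))$, is onto. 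Second, this map intertwines, via $\mathfrak{r}$, with the embedding $\iota\colon \mathcal{R}(A)\hookrightarrow\Lsc_{\mathcal{C}}(\TT(A))$.

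The first step is to pin down the topologies. The double dual in \cite{K17} is defined using $\tau_{up}$ on $M^*$, whereas $\Lsc_{\mathcal{C}}(\TT(A))$ uses $\tau_{iv}$. Since $\tau_{iv}$ refines $\tau_{up}$, we have $(A_+)^{**}\subseteq \Lsc_{\mathcal{C}}(\TT(A))$. For the reverse inclusion, I would argue as follows. A cone morphism $f\colon \TT(A)\to[0,\infty]$ that is $\tau_{iv}$-lower semicontinuous is monotone for the pointwise order, because the pointwise order is the specialization order of $\tau_{up}$ and $f$ is additive: if $\sigma\le\tau$ one would like $\tau=\sigma+\rho$. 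Such a decomposition is available in $\TT(A)$, which is a lattice-ordered cone. Then, using the compact Hausdorff ordered space $(\TT(A),\tau_{iv},\le)$, whose upper topology is $\tau_{up}$ (a standard fact for compact pospaces, see \cite{GHKLMS03}), monotone plus $\tau_{iv}$-lsc yields $\tau_{up}$-lsc. This identifies $\Lsc_{\mathcal{C}}(\TT(A))$ with $(A_+)^{**}$. It also checks that $\iota$ corresponds to the canonical map, since both send $a$ to the evaluation $\overline{a}$; by stability we may pass from $A$ to $A\otimes\mathcal{K}$, or alternatively use $\ll$-density of $A_+/\!\!\sim_{\rm R}$ from \autoref{prop:basisR} together with suprema.

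The second step is to invoke Keimel's result. For a commutative $\Ca$-algebra $A=C_0(X)$, \cite{K17} answers his question positively: the canonical map $\mathcal{RJ}(A_+)\to (A_+)^{**}$ is an isomorphism. The key ingredient there is a Schröder--Simpson type theorem for the cone $\TT(C_0(X))$, which is the cone of lower-semicontinuous (Radon-type) measures on $X$. That theorem says every $\tau_{up}$-lsc linear functional on measures is a supremum of evaluations at lsc functions, and these are precisely sups of $\overline{a}$ with $a\in C_0(X)_+$. Combining the two steps, $\iota$ is surjective, hence an order-isomorphism, and therefore a $\Cu$-cone isomorphism.

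The main obstacle is the topology comparison in the first step. It consists of showing that $\tau_{iv}$-lsc cone morphisms are automatically $\tau_{up}$-lsc, together with making sure that Keimel's commutative result is stated with exactly the conventions used here (the preCuntz structure $\pprec$ on $A_+$ rather than on $(A\otimes\mathcal{K})_+$). If the pospace argument fails, my fallback is to work directly with measures. I would write $f$ as the supremum of the $\tau_{iv}$-continuous functionals below it, using $\triangleleft$-density from \autoref{rmk:triangle}, and identify those with $\overline{a}$ for $a\in C_0(X)_+$ via Riesz representation.
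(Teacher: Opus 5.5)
Your proposal is correct and follows the paper's proof essentially step for step. You identify $\Lsc_{\mathcal{C}}(\TT(A))$ with Keimel's double dual $(A_+,+,\pprec)^{**}$: cone morphisms on $\TT(A)$ are monotone, and monotone $\tau_{iv}$-lower-semicontinuous maps are $\tau_{up}$-lower-semicontinuous because the $\tau_{up}$-open sets are exactly the $\tau_{iv}$-open upper sets. You check via $\mathfrak{r}$ and $\ll$-density that $\iota$ matches the canonical map $\mathcal{RJ}(A)\to(A_+,+,\pprec)^{**}$, and you quote Keimel's positive answer to \cite[Question 4.4]{K17} for commutative $A$.

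The only imprecision is your justification of monotonicity. Being lattice-ordered does not by itself yield $\tau=\sigma+\rho$ whenever $\sigma\le\tau$. What you need, and what the paper invokes, is the result of \cite{ERS11} that the pointwise order on $\TT(A)$ coincides with the algebraic order.
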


\begin{proof}
First, observe that we have $\Lsc(M^*,\tau_{up})\subseteq \Lsc(M^*,\tau_{iv})$, for any preCuntz semigroup $M$, since the interval topology is finer than the weak*-upper topology. Moreover, we have a form of converse as follows.

\textit{Claim:} Any order-preserving lower-semicontinuous map on $(M^*,\tau_{up})$ is a lower-semi\-continuous map on $(M^*,\tau_{iv})$. 

This relies upon the fact that any subset of $M^*$ is $\tau_{up}$-open if and only if it is $\tau_{iv}$-open. (This is an analogous result of \cite[Proposition III-1.6]{GHKLMS03}. See also \cite[\S 3.5]{K17}.)

Now, recall from \cite{K17} that $(A_+,+,\pprec)^{*}\simeq \TT(A)$, for any $\Ca$-algebra. Further, it has been shown in \cite{ERS11} that the point-wise order on $\TT(A)$ coincide the algebraic order. As a consequence, any cone morphism from $\TT(A)$ to $[0,\infty]$ is automatically order-preserving. Applying the Claim to the preCuntz semigroup $M:=(A_+,+,\pprec)$ gives us the equality $(A_+,+,\pprec)^{**}=\Lsc_{\mathcal{C}}(\TT(A))$.

Subsequently, by \cite[\S 3.5]{K17}, the canonical map $\hat{\iota}\colon \mathcal{RJ}(A)\rightarrow (A_+,+,\pprec)^{**}$ sending a round-ideal $J\mapsto \sup \{\overline{x} \mid x\in J\}$, where $\overline{x}\colon (A_+,+,\pprec)^{*}\rightarrow [0,\infty]$ is the evaluation map, is Scott-continuous. 

Let $a\in (A_+,+,\pprec)$. We know that $\tau(a)=\sup_{\epsilon>0} \tau((a-\epsilon)_+)$ for any $\tau$. (By lower-semicontinuity.) Arguing similarly as in the proof of \autoref{thm:commu}, we also know that for any $b\in a^{\pprec}$, there exists $\epsilon>0$ such that $b\pprec (a-\epsilon)_+\pprec a$. As a consequence, we explicitly compute that $\hat{\iota}(a^{\pprec})= \overline{a}$, for any $a\in A_+$.
Therefore, the canonical order-embedding $\iota\colon \mathcal{R}(A)\lhook\joinrel\rightarrow \Lsc_{\mathcal{C}}(\TT(A))$ agrees with the composition $\hat{\iota}\circ\mathfrak{r}\colon\mathcal{R}(A)\rightarrow (A_+,+,\pprec)^{**}$.

Finally, the answer after \cite[Question 4.4]{K17} states that $\hat{\iota}\colon \mathcal{RJ}(A)\rightarrow (A_+,+,\pprec)^{**}$ is an isomorphism, for any commutative $\Ca$-algebra $A$. It follows that $\iota\colon \mathcal{R}(A)\lhook\joinrel\rightarrow \Lsc_{\mathcal{C}}(\TT(A))$ is an isomorphism, for any commutative $\Ca$-algebra $A$.
\end{proof}

\begin{cor} Any $\rm AH$-algebra is tracially reflexive. 
\end{cor}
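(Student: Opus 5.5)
The corollary follows by combining the two results already established: \autoref{thm:commutative}, which says that commutative $\Ca$-algebras are tracially reflexive, and the permanence theorem of the previous section, which says that tracial reflexiveness passes to matrices, finite direct sums and inductive limits.

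Recall that an $\AH$-algebra is an inductive limit $A=\lim_{\rightarrow}(A_i,\phi_{i,j})$ in which each building block has the form
\[
A_i=\bigoplus_{k=1}^{n_i} p_{i,k}\,M_{m_{i,k}}(C(X_{i,k}))\,p_{i,k}.
\]
Here each $X_{i,k}$ is a compact metrizable space and each $p_{i,k}$ is a projection. The plan has three steps.

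\textbf{Step 1.} Each $C(X_{i,k})$ is commutative, so it is tracially reflexive by \autoref{thm:commutative}.

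\textbf{Step 2.} Passing to matrices, $M_{m}(C(X))$ is tracially reflexive. A finite direct sum of such algebras is then tracially reflexive as well.

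\textbf{Step 3.} The inductive limit $A$ is tracially reflexive, by the inductive-limit part of the permanence theorem.

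The only point needing care is the corner $p\,M_m(C(X))\,p$, since the permanence theorem does not mention corners. I would treat it in one of two ways.

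The first option is to change the presentation. It is standard that one may assume the building blocks are full matrix algebras over $C(X)$, or at least that the corner is stably isomorphic to $C(X)\otimes\mathcal{K}$ when $p$ is full. In that case stabilization gives the result, because both $\mathcal{R}$ and $\TT$ are invariant under stable isomorphism.

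The second option is to prove directly that tracial reflexiveness passes to full hereditary subalgebras (full corners). By Brown's theorem, a full hereditary subalgebra $B\subseteq A$ of a $\sigma$-unital $A$ satisfies $B\otimes\mathcal{K}\simeq A\otimes\mathcal{K}$. Since $\TT(B)\simeq\TT(B\otimes\mathcal{K})$ and $\mathcal{R}(B)$ is defined through $B\otimes\mathcal{K}$, the map $\iota$ for $B$ is identified with the map $\iota$ for $A\otimes\mathcal{K}$. The latter is an isomorphism by the stabilization permanence.

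A non-full corner $p\,M_m(C(X))\,p$ is a direct sum over the clopen pieces of $X$ on which $p$ has constant rank. It is therefore $\bigoplus_j p_j\,M_m(C(X_j))\,p_j$, where each $p_j$ is nonzero everywhere on $X_j$, hence full, and the clopen pieces are finite in number because $X$ is compact. This reduces to the full case. I expect this corner bookkeeping to be the main, though routine, obstacle. Everything else is a direct citation.
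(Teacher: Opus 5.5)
Your proof is correct and is the argument the paper intends: the corollary is stated without proof, as an immediate consequence of \autoref{thm:commutative} together with the permanence theorem, and your treatment of the corners $p\,M_m(C(X))\,p$ fills in a point the paper passes over. That treatment splits off the clopen set where $p$ vanishes, then uses Brown's theorem together with the fact that tracial reflexiveness depends only on the stable isomorphism class. Rely on your second option rather than the first, since the claim that AH building blocks can always be taken to be full matrix algebras over $C(X)$ is not a reduction you can cite in general.
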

\section{A couple of criteria for Tracial Reflexiveness}
\subsection{A first criterion via the Cuntz semigroup} We prove that any separable $\Ca$-algebra with an algebraic lattice of ideals is tracially reflexive. The proof relies on Cuntz semigroup techniques, and more particularly, on results contained in \cite{MR23}. We refer the reader to \cite{GP24} for a detailed survey on the Cuntz semigroup. For the sake of readability, we recall a few known results about $\Cu$-semigroups, in the concrete setting of $\Ca$-algebras. 

Let $A$ be a $\Ca$-algebra and let $a,b\in A_+$. 
We say that $a$ \emph{Cuntz subequivalent} to $b$ in $A$, denoted $a \lesssim_{\Cu} b$, if for every $\epsilon > 0$,  there exists $ r\in A$ such that $\left\lVert rbr^*-a \right\rVert < \epsilon$. We antisymmetrize this subequivalence relation, to obtain a an equivalence relation $\sim_{\Cu}$ called the \emph{Cuntz equivalence relation}.
The \emph{Cuntz semigroup} of $A$ is 
  \[
  \Cu(A):=((A \otimes \mathcal{K})_{+}/\sim_{\Cu},+,\leq)
  \]
where the addition is canonically defined and the order is induced by $\precsim_{\Cu}$.
It has been shown that the functor $\Cu\colon \Ca\longrightarrow \Cu$ is well-defined and continuous. See \cite{CEI08, APT18}.

A \emph{functional on $\Cu(A)$} is a monoid morphism $\lambda\colon \Cu(A)\rightarrow [0,\infty]$, preserving suprema of increasing sequences. The set of functionals is denoted by $\FF(\Cu(A))$. Equipped with the point-wise operation, $\FF(\Cu(A))$ becomes an extended Choquet cone. Additionally, the contravariant functors $\FF\colon \Cu\rightarrow \mathcal{C}_{\rm Choq}$ and $\TT\colon \Ca\rightarrow \mathcal{C}_{\rm Choq}$ are naturally isomorphic. See \cite{ERS11, MR23} for more details. 

\emph{The realification of $\Cu(A)$}, denoted by $\Cu(A)_R$, is the smallest $\Cu$-cone of $\Lsc_\CC(\FF(\Cu(A)))$ containing $\{\hat{s}\mid s\in \Cu(A)\}$, where $\hat{s}\colon \FF(\Cu(A))\rightarrow [0,\infty]$ is the d-monoid morphism sending $\lambda\rightarrow \lambda(s)$. This is a $\Cu$-cone isomorphic to $\Cu(A\otimes \mathcal{W})$, where $\mathcal{W}$ denotes the Jacelon-Razak algebra. See \cite[Lemma 5.1.3]{R13}. 

\begin{thm}
Let $A$ be a separable $\Ca$-algebra be such that $\Cu(A)\otimes \{0,\infty\}$ is algebraic. 

Then $A$ is tracially reflexive.
\end{thm}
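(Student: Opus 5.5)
We reduce the statement to the identification of the realification with $\Lsc_\CC$ on the functional cone, which is what \cite{MR23} provides in the algebraic setting.

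\textbf{Step 1: realification and $\mathcal{R}(A)$.} Under $\FF(\Cu(A))\simeq\TT(A)$ (as extended Choquet cones, with $\tau_{iv}$ corresponding to the natural compact topology on functionals), each $\hat s$ for $s=[a]_{\Cu}$ is exactly the evaluation $\overline{a}$. Hence $\{\hat s\mid s\in\Cu(A)\}\subseteq\iota(\mathcal{R}(A))$. Since $\mathcal{R}(A)\simeq \LL(\TT(A))$ is a $\Cu$-cone inside $\Lsc_\CC(\TT(A))$ (\autoref{prop:basisR}, \autoref{cor:triangle}), minimality gives $\Cu(A)_R\subseteq\iota(\mathcal{R}(A))$.

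\textbf{Step 2: the reverse inclusion.} Take $a\in(A\otimes\mathcal{K})_+$ and $\epsilon>0$. Then $\overline{(a-\epsilon)_+}\ll\overline{a}$, and by \autoref{rmk:triangle} the elements $r\cdot\widehat{[b]}$, with $r\in\R_+$, are $\triangleleft$-dense. Realification should absorb these positive scalar multiples and suprema of increasing sequences, so $\iota(\mathcal{R}(A))=\Cu(A)_R$. Alternatively, this follows from $\Cu(A)_R\simeq\Cu(A\otimes\mathcal{W})$ together with $\mathcal{R}(A)\simeq\mathcal{R}(A\otimes\mathcal{W})$, since $\TT(A\otimes\mathcal{W})\simeq\TT(A)$. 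Either way, it now suffices to show that $\Cu(A)_R=\Lsc_\CC(\FF(\Cu(A)))$.

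\textbf{Step 3: the main obstacle.} This step is where algebraicity of $\Cu(A)\otimes\{0,\infty\}\simeq\Lat(A)$ enters. I would invoke the result of \cite{MR23} stating that, for a separable $\Cu$-semigroup $S$ satisfying the usual axioms of $\Ca$-algebras with $S\otimes\{0,\infty\}$ algebraic, every $f\in\Lsc_\CC(\FF(S))$ is the supremum of an increasing sequence of functions that are continuous on the set where the next one is finite. In other words, $\Lsc_\CC(\FF(S))=\LL(\FF(S))$, and $\LL(\FF(S))$ equals the realification. The heuristic is as follows.
\begin{itemize}
\item Compact ideals give closed-open supports on $\FF(S)$.
\item Lower semicontinuous cone maps can therefore be approximated from below by functions that are finite and continuous on pieces with compact support.
\item Separability provides countable bases, so the approximants can be taken as increasing sequences.
\end{itemize}

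If this is not available verbatim, I would prove it in two parts. First, use the algebraic basis of compact ideals to cut $f$ into pieces $f\cdot\chi_{I}$, where $I$ ranges over compact ideals. Second, on each piece, apply the Edwards-type separation results of \cite{ERS11} to obtain continuous approximants.

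Combining Steps 1–3 gives $\iota(\mathcal{R}(A))=\Lsc_\CC(\TT(A))$, which means $A$ is tracially reflexive.
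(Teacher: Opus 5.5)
Your overall route is the paper's. You pass through the realification $\Cu(A)_R$ and then invoke \cite[Theorem 5.3]{MR23}. The paper applies that theorem to the $\Cu$-cone $S=\Cu(A\otimes\mathcal{W})\simeq\Cu(A)_R$, after checking (O5), (O6) and $\Lat(S)\simeq\Cu(A)\otimes\{0,\infty\}$. It uses separability only to make $\FF(S)$ metrizable, so that $(\Lsc_\CC(\FF(S)))_\sigma=\Lsc_\CC(\FF(S))$. Your Step 3 is exactly this. The fallback sketch there is not needed and would not stand as a proof on its own.

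Steps 1 and 2, however, rest on a misidentification that you should fix.
\begin{itemize}
\item Under $\FF(\Cu(A))\simeq\TT(A)$, the functional attached to $\tau$ is $d_\tau$. So $\widehat{[a]}(\tau)=d_\tau(a)=\lim_n\tau(a^{1/n})$, not $\tau(a)$. For $A=\mathbb{C}$ and $a=\tfrac12$ these differ by a factor $2$.
\item Likewise, \autoref{rmk:triangle} gives $\triangleleft$-density of the $\overline{b}$, not of the $r\widehat{[b]}$.
\item The inclusion in Step 1 still holds. For $\|a\|\le 1$ one has $\widehat{[a]}=\sup_n\overline{a^{1/n}}$, an increasing supremum that stays inside $\iota(\mathcal{R}(A))$.
\item Step 2 needs an actual argument rather than ``realification should absorb''. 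Use $\tau(a)=\int_0^{\|a\|}d_\tau((a-t)_+)\,dt$. This exhibits $\overline{a}$ as the supremum of the increasing dyadic lower sums $\sum_{k\geq 1}2^{-n}\widehat{[(a-k2^{-n})_+]}$, which lie in $\Cu(A)_R$. Alternatively, cite \cite[Theorem 3.2.1]{R13} for $\LL(\TT(A))\simeq\Cu(A)_R$, as the paper does.
\item Your $\mathcal{W}$-alternative is circular as stated. It requires $\Cu(A\otimes\mathcal{W})\simeq\mathcal{R}(A\otimes\mathcal{W})$, compatibly with the embeddings into $\Lsc_\CC(\TT(A))$, and that is the same fact you are trying to establish.
\end{itemize}
With this correction, your argument coincides with the paper's.
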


\begin{proof}
The proof essentially relies on the remarkable result contained in {\cite[Theorem 5.3]{MR23}}, stating that $S\simeq (\Lsc_\mathcal{C}(\FF(S))_\sigma$ for any $\Cu$-cone $S$ satisfying (O5) and (O6), and whose lattice of ideals is algebraic.

Fix $S:=\Cu(A\otimes \mathcal{W})$. As recalled above, $S\simeq \Cu(A)_R$ and hence, is a $\Cu$-cone. Being the Cuntz semigroup of a $\Ca$-algebra, $S$ automatically satisfies (O5) and (O6). See \cite[Proposition 5.1.1]{R13}. Additionally, it is known that $ \Lat(A)\simeq\Lat(A\otimes \mathcal{W})\simeq\Lat(S) \simeq \Cu(A)\otimes\{0,\infty\}$. (See \cite[\S 5]{APT18} and \cite[\S 7]{APT18}, for the second and last isomorphisms, respectively.)

We now apply \cite[Theorem 5.3]{MR23} to deduce that $S\simeq (\Lsc_\mathcal{C}(\FF(S)))_\sigma$. Under the separable assumption of $A$, we get that $\FF(S)$ is metrizable. Henceforth, $(\Lsc_\mathcal{C}(\FF(S)))_\sigma=\Lsc_\mathcal{C}(\FF(S))$. 
Gathering all the above, we obtain $\mathcal{R}(A)\simeq \LL(\TT(A))\simeq\Cu(A)_R\simeq\Lsc_\mathcal{C}(\FF(\Cu(A)_R))\simeq\Lsc_\mathcal{C}(\FF(\Cu(A)))\simeq \Lsc_\mathcal{C}(\TT(A))$. 

(We have successively used \autoref{cor:triangle} (ii), \cite[Theorem 3.2.1]{R13}, \cite[Theorem 5.3]{MR23}, \cite[Theorem 3.1.1]{R13} and \cite[Theorem 4.8]{ERS11}.)
\end{proof}

The result contained in \cite[Theorem 1.10]{PR07}, combined with the fact that $\Cu(A\otimes \mathcal{O}_\infty)\simeq \Cu(A)\otimes \{0,\infty\}$, can be used to show that a separable $\Ca$-algebra $A$ has topological dimension zero if and only if its lattice of ideals is algebraic. See \cite{NTV26}. As a result, we immediately obtain the following corollary.

\begin{cor} Any separable $\Ca$-algebra of topological dimension zero is tracially reflexive. These include the following separable $\Ca$-algebras.

\begin{itemize}
\item[(i)] Simple $\Ca$-algebras. E.g., Villadsen algebras and selfless $\Ca$-algebras.
\item[(i)] $\Ca$-algebras with finitely many ideals.
\item[(iii)] $\Ca$-algebras having the (weak) ideal property. E.g., Real rank zero $\Ca$-algebras.
\end{itemize}
\end{cor}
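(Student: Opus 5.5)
The corollary follows from the preceding theorem together with the characterization quoted just before it. The plan is to check, for each listed class, that the algebra is separable and that $\Cu(A)\otimes\{0,\infty\}$ is algebraic. The quoted characterization (from \cite[Theorem 1.10]{PR07}, $\Cu(A\otimes\mathcal{O}_\infty)\simeq\Cu(A)\otimes\{0,\infty\}$, and \cite{NTV26}) says this second condition is equivalent to topological dimension zero. Since $\Lat(A)\simeq\Cu(A)\otimes\{0,\infty\}$ by \cite{APT18}, as recalled in the proof of the theorem, it suffices in each case to show that $\Lat(A)$ is an algebraic lattice. That is, every ideal should be the supremum of the compact ideals it contains, where compact ideals are those $I$ with $I\ll I$ in $\Lat(A)$.

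For simple algebras, $\Lat(A)=\{0,A\}$. Here $0$ is trivially compact. For $A$ itself, I would argue compactness directly: if $A=\overline{\bigcup I_n}$ for an increasing sequence of ideals, then some $I_n$ is nonzero, so $I_n=A$ by simplicity. Hence the lattice is algebraic. Villadsen algebras and selfless algebras are then just examples of (separable) simple algebras.

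If $A$ has finitely many ideals, every increasing sequence in $\Lat(A)$ is eventually constant. So every ideal is compact, and the lattice is trivially algebraic.

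For the (weak) ideal property, I would use the definition: every ideal is generated, as a closed ideal, by the projections of $A\otimes\mathcal{K}$ it contains. An ideal generated by a single projection $p$ is compact, because $[p]\ll[p]$ in $\Cu(A)$, so $[p]\otimes\infty$ is compact in $\Cu(A)\otimes\{0,\infty\}$. Every ideal is therefore a supremum of compact ideals. For the weak ideal property one works with the analogous generating set in $\Cu(A\otimes\mathcal{K})$, or with subquotients, and the same argument applies. Real rank zero implies the ideal property, since ideals of real rank zero algebras have approximate units of projections.

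The main obstacle is this last case: making sure the weak ideal property, rather than the ideal property, yields algebraicity of $\Lat(A)$. I would rely on the literature equivalence, for separable algebras, between the weak ideal property and topological dimension zero (via Pasnicu and Rørdam), rather than reprove it.
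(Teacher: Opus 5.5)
Your proposal is correct and follows the paper's route: the preceding theorem combined with the equivalence, for separable $A$, between topological dimension zero and algebraicity of $\Lat(A)\simeq\Cu(A)\otimes\{0,\infty\}$. Your direct checks for simple algebras, finitely many ideals and the ideal property are valid extra detail that the paper leaves implicit.

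There is one inaccuracy, in the weak ideal property case. The weak ideal property is \emph{not} equivalent to topological dimension zero: e.g.\ $\mathcal{W}$ is simple, but $\mathcal{W}\otimes\mathcal{K}$ has no nonzero projections. Also, ``the same argument applies'' does not hold literally, because projections in subquotients $(J/I)\otimes\mathcal{K}$ need not lift to projections generating ideals of $A$. You should instead cite only the implication you need, weak ideal property $\Rightarrow$ topological dimension zero, which is true and which I believe is due to Pasnicu and Phillips rather than Pasnicu and R{\o}rdam.
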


\subsection{A second criterion via dual pairs of cones}We next expose an abstract class of tracially reflexive $\Ca$-algebras. More particularly, we find a weaker condition on the lattice of closed two-sided ideals, which suffices to deduce tracial reflexiveness of the $\Ca$-algebra at hand. In view of setting our theorem, we first recall several preliminary results that we gather below. 

\begin{prg} \textbf{A handy picture of the cone of traces}.
Following \cite[\S 3.3]{APRT21} and \cite[\S 2.2]{MR23}, we may use the lattice of ideals of a $\Ca$-algebra $A$ to provide a useful decomposition of its cone of traces $\TT(A)$, via idempotents elements. We highlight that all the above can be done in the abstract context of $\Cu$-semigroups and their \emph{cones of functionals}.

By an \emph{idempotent} of a monoid $M$, we mean an element $m$ such that $2m=m$, and we denote the set of idempotents by $\EE(M)$. 

The set of idempotents in the cone of traces of a $\Ca$-algebra $A$ encodes its lattice of ideals. More particularly, there are order-isomorphisms 
\[
\begin{array}{ll}
(\Lat(A),\subseteq)\simeq (\EE(\TT(A)),\geq)\simeq (\EE(\Lsc_{\mathcal{C}}(\TT(A))),\leq)\\
\hspace{1cm}I\hspace{0,5cm}\longmapsto \hspace{0,5cm}\tau_I \hspace{1cm}  \longmapsto \hspace{0,5cm}f_I
\end{array}
\]
where $\tau_I$ takes value $0$ on $I$ and $\infty$ otherwise. Similarly, $f_I$ takes value $0$ whenever $\tau\leq \tau_I$ and $\infty$ otherwise. 

A fortiori, both $(\EE(\TT(A)),\geq)$ and $(\EE(\Lsc_{\mathcal{C}}(\TT(A))),\leq)$ are complete lattices.
\begin{dfn} Let $A$ be a $\Ca$-algebra. Let $\tau\in \TT(A)$ and let $f\in \Lsc_{\mathcal{C}}(\TT(A))$.
\begin{itemize}
\item[(i)] The \emph{support idempotent} of $\tau$ is defined by $\varepsilon(\tau):=\sup\{e\in \EE(\TT(A))\mid e\leq \tau\}$.
\item[(ii)] The \emph{support idempotent} of $f$ is defined by $\varepsilon(f):=\inf\{e\in \EE(\Lsc_\CC(\TT(A)))\mid f\leq e\}$. 
\item[(iii)] The \emph{support ideals} of $\tau$ and $f$ are defined to be the respective images of $\varepsilon(\tau)$ and $\varepsilon(f)$ via the above identifications.
\end{itemize}
\end{dfn}

We have the following computations.

\begin{prop}\label{prop:computeideals} Let $A$ be a $\Ca$-algebra. Let $\tau\in \TT(A)$ and let $f\in \Lsc_{\CC}(\TT(A))$. We compute that
\begin{itemize}
\item[(i)] $\varepsilon(\tau)=\inf\limits_{n} \frac{1}{n}\tau$ and $I_\tau$ is the closed two-sided ideal generated by $\{a\in A_+\mid \tau(a)<\infty\}$.
\item[(ii)]  $\varepsilon(f)=\sup\limits_{n} nf$ and $I_f$ is the closed two-sided ideal generated by $\{a\in A_+ \mid \overline{a}\leq \varepsilon(f)\}$. 

As a consequence, for any $e\in \EE(\Lsc_{\mathcal{C}}(\TT(A)))$, we have that $f(e)=\infty$ if and only if $e\nleq \varepsilon(f)$. 
\end{itemize}
\end{prop}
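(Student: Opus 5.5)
For (i), I would start from the idempotents of $\TT(A)$, which are exactly the traces taking only the values $0$ and $\infty$. For any $\tau\in\TT(A)$ the sequence $\frac1n\tau$ is decreasing. Its pointwise infimum $\sigma(a):=\inf_n\frac1n\tau(a)$ is $0$ when $\tau(a)<\infty$ and $\infty$ otherwise. So $\sigma$ is additive, homogeneous, tracial and idempotent. For lower-semicontinuity, note that the set $\{a\in A_+\mid \tau(a)<\infty\}$ is a hereditary cone, invariant under $x^*x\mapsto xx^*$ by the trace identity. Its norm closure is therefore $J_+$ for the closed two-sided ideal $J$ it generates. I would then check that $\sigma=\tau_J$ on the nose: it vanishes on the finite-trace cone, and an element of $J_+\setminus\{\tau<\infty\}$ must be handled by lower-semicontinuity.

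This is the delicate point. Since $\tau_J$ is the lower-semicontinuous regularization, I would instead argue directly with the ordering. Any idempotent $e\le\tau$ satisfies $e=\frac1n e\le \frac1n\tau$ for all $n$, hence $e\le \sigma\le \tau_J$. Conversely, $\tau_J\le \frac1n\tau$: on $J_+$ the left side is $0$, and off $J$ we have $\tau=\infty$ because $\{\tau<\infty\}\subseteq J$. So $\tau_J$ is an idempotent below $\tau$, and $\varepsilon(\tau)=\tau_J=\inf_n\frac1n\tau$, where the infimum is taken in the complete lattice $\TT(A)$. The identification $I_\tau=J$ then follows from the correspondence $I\mapsto\tau_I$.

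For (ii) the argument is dual. The sequence $nf$ increases, and its supremum $g=\sup_n nf$ is lower-semicontinuous, being a pointwise supremum of increasing lower-semicontinuous maps, and it is a cone morphism. It takes only the values $0$ and $\infty$, so $2g=g$ and $g$ is an idempotent with $f\le g$. Any idempotent $e\ge f$ gives $e=ne\ge nf$, hence $e\ge g$. Therefore $\varepsilon(f)=g$.

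To identify the ideal, I write $\varepsilon(f)=f_I$ with $f_I(\tau)=0$ iff $\tau\le\tau_I$, i.e. iff $\tau$ vanishes on $I$. The positive elements $a$ with $\overline a\le f_I$ are exactly those with $\tau(a)=0$ whenever $\tau|_I=0$. Elements of $I_+$ qualify. Conversely, if $a\notin I$, then $\tau_I(a)=\infty$ with $\tau_I\le\tau_I$, so $\overline a\not\le f_I$. Hence $\{a\mid\overline a\le\varepsilon(f)\}=I_+$, and this set generates $I$.

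For the consequence, I read $f(e)$ as the pairing of $f$ with the idempotent trace $e=\tau_J$, viewed via the correspondence as $f_J$. If $e\nleq\varepsilon(f)$, i.e. $J\not\subseteq I_f$, then $\tau_J\not\le\tau_{I_f}$, so $\varepsilon(f)(\tau_J)=\infty$. Since $f(\tau_J)=f(n\tau_J)=nf(\tau_J)$, the value $f(\tau_J)$ is either $0$ or $\infty$. It equals $\sup_n nf(\tau_J)=\varepsilon(f)(\tau_J)=\infty$. Conversely, if $e\le\varepsilon(f)$, then $f(\tau_J)\le\varepsilon(f)(\tau_J)=0$.

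The main obstacle is keeping the order reversals among the three lattices consistent. I also need to be careful that infima in $\TT(A)$ are the lower-semicontinuous regularizations, not the pointwise infima; the sandwich argument above avoids computing that regularization.
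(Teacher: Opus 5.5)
Your treatment of (i) and (ii) is more careful than the paper's two-line sketch. The paper claims pointwise that $\varepsilon(\tau)(a)=\infty$ iff $\tau(a)=\infty$, and this fails for $\mathrm{Tr}$ on $\mathcal{K}$, where $\varepsilon(\mathrm{Tr})=\tau_{\mathcal{K}}=0$. You are right that $\inf_n\frac{1}{n}\tau$ must be the infimum in the lattice $\TT(A)$, not the pointwise one.

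However, your chain $e\le\sigma\le\tau_J$ has its second inequality backwards. Since $\{\tau<\infty\}\subseteq J_+$, you get $\tau_J\le\sigma$, with strict inequality in the $\mathcal{K}$ example. The correct step goes as follows. Any lower-semicontinuous trace $\rho$ with $\rho\le\frac{1}{n}\tau$ for all $n$ vanishes on $\{\tau<\infty\}$. The set $\{a\in A_+\mid\rho(a)=0\}$ is norm-closed by lower-semicontinuity, so $\rho$ vanishes on $J_+$ and $\rho\le\tau_J$. Run this for every lower bound $\rho$, not only idempotent ones, to conclude that $\tau_J$ is the infimum. Part (ii) is fine, including $\{a\mid\overline{a}\le\varepsilon(f)\}=(I_f)_+$.

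The genuine gap is in the final consequence, at exactly the order reversal you flagged. The step ``$J\not\subseteq I_f$, hence $\tau_J\not\le\tau_{I_f}$'' is false. The inequality $\tau_J\le\tau_I$ means that $\tau_J$ vanishes on $I_+$, i.e.\ $I\subseteq J$.

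Your own correct observations point the other way. Since $f(\tau_J)\in\{0,\infty\}$, you have $f(\tau_J)=\varepsilon(f)(\tau_J)=f_{I_f}(\tau_J)$. This gives $f(\tau_J)=\infty$ iff $\tau_J\nleq\tau_{I_f}$ iff $I_f\not\subseteq J$, i.e.\ $\varepsilon(f)\nleq f_J$. That is the opposite comparison.

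In fact, the version you set out to prove is false; there $e=f_J$ is compared with $\varepsilon(f)$ inside $\Lsc_{\CC}(\TT(A))$. Two counterexamples:
\begin{itemize}
\item For $f=0$ one has $\varepsilon(f)=0$, so every nonzero idempotent $e$ satisfies $e\nleq\varepsilon(f)$, while $f(\tau_J)=0$.
\item For $A=\mathbb{C}$ and $f=\overline{1}$, one has $f_0=0\le\varepsilon(f)$ but $f(\tau_0)=\infty$.
\end{itemize}

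The consequence has to be read with $e\in\EE(\TT(A))$ and $\varepsilon(f)$ replaced by its idempotent trace $\tau_{I_f}$, i.e.\ $f(e)=\infty$ iff $e\nleq\tau_{I_f}$. This is how the paper uses it later ($f(\tau)=\infty$ iff $I_\tau\nsupseteq I_f$). It is also what its proof encodes in `$\varepsilon(f)(\tau)=0$ iff $f(\tau)=0$'. With that reading your ingredients finish the argument at once, since $f(e)=\varepsilon(f)(e)$ and $\varepsilon(f)(e)=0$ iff $e\le\tau_{I_f}$.
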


\begin{proof} The computation of $\varepsilon(\tau)$ is readily obtained and $\varepsilon(\tau)(a)= \infty$ if and only if $\tau(a)=\infty$. Similarly, the computation of $\varepsilon(f)$ is easily done and $\varepsilon(f)(\tau)= 0$ if and only if $f(\tau)=0$. 
\end{proof}

We can now describe the cone of traces of a $\Ca$-algebra $A$ as the disjoint union 
\[\TT(A)=\bigsqcup\limits_{I\in \Lat(A)} \TT_I(A)\] where $\TT_I(A):=\{\tau\in \TT(A) \mid \varepsilon(\tau)=\tau_I\}$. 

Note that $\TT_I(A)$ is a monoid (with neutral element $\tau_I$) closed under $\R_+$-multiplication.
\end{prg}

\begin{prg} \textbf{The Pedersen ideal and $\boldsymbol{C_\mathcal{C}(\TT(A))}$}. For specific ideals $I\in \Lat(A)$, we are able to describe the behavior of $\TT_I(A)$ and its continuous functions, via their Pedersen ideals. 

The \emph{Pedersen ideal} $\Ped(A)$ of a $\Ca$-algebra $A$ is the smallest dense ideal in $A$. Such an ideal is a priori \emph{not} closed, and always exists. See \cite[\S 5.6]{P79}. A useful characterization using the Cuntz semigroup is given by $\Ped(A)=\{a\in A_+ \mid [a]\in \Cu(A)_\ll\}$. See e.g. \cite{TT15}. 

For any closed two-sided ideal $I\in \Lat(A)$, the topology induced by $\tau_{iv}$ on $\TT_I(A)$ coincides with the point-wise convergence on the positive elements of the Pedersen ideal of $I$. Consequently, any net $(\tau_i)_i$ in $\TT_I(A)$ converging to $\tau$ for the interval topology, satisfies that $\lim_i \tau_i(a)=\tau(a)$, for any positive element $a\in \Ped(I)_+$. See \cite[Proposition 3.11 (i)]{ERS11}. In particular, any $a\in \Ped(I)_+$ induces a continuous cone morphism $\overline{a}_\mid\colon \TT_I(A)\rightarrow \R_+$. 

Lastly, we can find a \emph{compact base} of the (cancellative) topological cone $\TT_I(A)$ -i.e., a compact subset $T$ such that $0_{\TT_I(A)}\notin T$ and $\R_+T=\TT_I(A)$-, whenever the ideal $I$ satisfies one of the following conditions. 

\begin{prop}\label{prop:compactbase} Let $A$ be a $\Ca$-algebra and let $I$ be a closed two-sided ideal of $A$. The following statements are equivalent.
\begin{itemize}
\item[(i)] $I$ is singly-generated and is compact element of the complete lattice $\Lat(A)$. 
\item[(ii)] There exists a full element $e_I$ contained in $\Ped(I)$.
\item[(iii)] $\Prim(I)$ is a compact space.
\end{itemize}
Furthermore, they all imply
\begin{itemize}
\item[(iv)] $\TT_I(A)$ is a locally compact (cancellative) subcone of $\TT(A)$, with compact base $T_{e_I}(A)$.
\end{itemize}
\end{prop}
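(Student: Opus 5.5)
We prove (i)$\Rightarrow$(ii)$\Rightarrow$(iii)$\Rightarrow$(i), and then deduce (iv) from (ii). Throughout we use two facts: $I$ is generated, as a closed ideal, by its Pedersen ideal; and the description $\Ped(I)_+=\{a\in I_+\mid [a]\in \Cu(I)_\ll\}$.

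\textbf{(i)$\Rightarrow$(ii).} Let $I$ be compact in $\Lat(A)$. The closed ideals generated by finite subsets of $\Ped(I)_+$ form an upward directed family whose supremum is $I$, since $\Ped(I)$ is dense in $I$. By compactness, $I$ is generated by finitely many $a_1,\dots,a_k\in\Ped(I)_+$. Then $e_I:=a_1+\dots+a_k\in\Ped(I)_+$ is full in $I$.

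\textbf{(ii)$\Rightarrow$(iii).} Let $e_I\in\Ped(I)_+$ be full. The standard description of $\Prim(I)$ gives $\Prim(I)=\bigcup_{\epsilon>0}\{P\mid \|e_I+P\|>\epsilon\}$. Since $[e_I]\ll[e_I]$ in the ideal sense (it lies in $\Cu(I)_\ll$ and is full), there is $\epsilon>0$ such that $(e_I-\epsilon)_+$ is still full. Every primitive ideal then satisfies $\|e_I+P\|>\epsilon$. The set $\{P\mid\|e_I+P\|\ge\epsilon\}$ is compact, and this gives compactness of $\Prim(I)$.

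\textbf{(iii)$\Rightarrow$(i).} Open subsets of $\Prim(I)$ correspond to ideals of $I$, and an open cover corresponds to an upward directed family of ideals with supremum $I$. Compactness of $\Prim(I)$ therefore gives compactness of $I$ in $\Lat(I)$, and hence in $\Lat(A)$, since $\Lat(I)$ is an initial segment. For single generation, cover $\Prim(I)$ by the open sets $\{P\mid a\notin P\}$ for $a\in I_+$. A finite subcover $a_1,\dots,a_k$ yields the generator $\sum a_j$.

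\textbf{(iv).} Set $T_{e_I}(A):=\{\tau\in\TT_I(A)\mid \tau(e_I)=1\}$. First, every $\tau\in\TT_I(A)$ has support ideal $I$, so $\tau$ is finite on $\Ped(I)$ and nonzero on the full element $e_I$, apart from the neutral element $\tau_I$. Hence $0<\tau(e_I)<\infty$, and $\R_+T_{e_I}(A)=\TT_I(A)$ with $\tau_I\notin T_{e_I}(A)$. Next, by \cite[Proposition 3.11]{ERS11}, $\overline{e_I}$ is continuous on $\TT_I(A)$, so $T_{e_I}(A)$ is closed in $\TT_I(A)$.

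\textbf{Main obstacle.} The hard step is compactness of $T_{e_I}(A)$. Its closure in the compact space $\TT(A)$ might contain traces with a strictly smaller support idempotent, that is, traces leaving $\TT_I(A)$. I would rule this out using the fullness of $e_I$ and the cut-down $(e_I-\epsilon)_+$. Every $a\in\Ped(I)_+$ satisfies $a\lesssim_{\rm R}$ a multiple of $(e_I-\epsilon)_+$ in the sense of Cuntz–Pedersen comparison. This gives uniform bounds $\tau(a)\le C_a$ on $T_{e_I}(A)$. The convergence inequality $\lim\sup\tau_i((e_I-\epsilon)_+)\le\tau(e_I)$ then shows that limits stay finite on $\Ped(I)$, hence remain in $\TT_I(A)$, and have $\tau(e_I)=1$.

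Cancellativity and local compactness of $\TT_I(A)$ then follow. Pointwise evaluation on $\Ped(I)_+$ gives a cancellative embedding, and a cone with compact base is locally compact.
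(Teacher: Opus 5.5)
Your argument is correct in outline, modulo a few local omissions listed in the second paragraph, and it takes a different, more self-contained route than the paper.

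\textbf{Comparison with the paper.} The paper proves (i)$\Leftrightarrow$(ii) in the Cuntz semigroup. It writes $I=I_x$ via $\Lat(A)\cong\Lat(\Cu(A))$ and picks a $\ll$-increasing sequence $x_n\nearrow x$. Compactness then gives $I=I_{x_n}$, and a representative of $x_n$ lies in $\Ped(I)$. Conversely, it uses $e_I\in\Ped(I)\subseteq\bigcup_n I_n$. For (ii)$\Leftrightarrow$(iii), and for (iv) together with Klee's theorem, it simply cites \cite[Proposition 3.1]{TT15}.

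You instead close a cycle with elementary C*-arguments:
\begin{itemize}
\item minimality of the Pedersen ideal among dense ideals;
\item compactness of $\{P:\|x+P\|\ge\epsilon\}$ in $\Prim$;
\item open covers of $\Prim(I)$;
\item a direct proof that $T_{e_I}(A)$ is closed in the compact space $\TT(A)$, via the interval-topology inequalities and \cite[Proposition 3.11]{ERS11}.
\end{itemize}
This exposes what the citation hides, at the cost of length. Your step ``$(e_I-\epsilon)_+$ is still full'' is really the paper's (ii)$\Rightarrow$(i) argument, namely $e_I\in\Ped(I)\subseteq\bigcup_n\overline{A(e_I-1/n)_+A}$. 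Your (i)$\Rightarrow$(ii) uses compactness for directed families and never uses single generation. If compactness in $\Lat(A)$ is read sequentially, as for the paper's $\ll$, you should instead take a generator $a$ of $I$ and the increasing sequence of ideals generated by $(a-1/n)_+$. That is exactly the paper's argument.

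\textbf{Steps that need repair.}

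\emph{Membership of limits in $\TT_I(A)$.} In (iv), finiteness of a limit $\tau$ on $\Ped(I)$ comes from $\tau(a)\le\liminf_i\tau_i(a)\le C_a$, not from the limsup inequality. Moreover, it only shows that the support ideal of $\tau$ contains $I$. To conclude ``hence remain in $\TT_I(A)$'' you also need $\tau(b)=\infty$ for every $b\in A_+\setminus I$. This is where the limsup inequality is used. Pick $\delta>0$ with $(b-\delta)_+\notin I$. Then $\tau_i((b-\delta)_+)=\infty$ for all $i$, so $\tau(b)\ge\limsup_i\tau_i((b-\delta)_+)=\infty$. Under the paper's identification $(\Lat(A),\subseteq)\simeq(\EE(\TT(A)),\geq)$, a strictly smaller support idempotent means a strictly larger support ideal. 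That is precisely this case, which you flagged but did not treat.

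\emph{The normalization $\tau(e_I)=1$.} Once $\tau\in\TT_I(A)$, the equality $\tau(e_I)=\lim_i\tau_i(e_I)=1$ follows from the closedness you already obtained from \cite[Proposition 3.11]{ERS11}. The inequality for $(e_I-\epsilon)_+$ gives at best a positive lower bound on $\tau(e_I)$, not equality.

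\emph{The uniform bound.} This needs its one-line proof. The algebraic ideal generated by a full $x\in I_+$ is dense in $I$, so it contains $\Ped(I)$. Hence $a\le\sum_k v_k^*xv_k$ for finitely many $v_k$ in the unitization, and $\tau(a)\le(\sum_k\|v_k\|^2)\,\tau(x)$ for every trace $\tau$. The cut-down is unnecessary: $x=e_I$ already works.

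\emph{Transfer of compactness in (iii)$\Rightarrow$(i).} ``$\Lat(I)$ is an initial segment'' does not by itself transfer compactness from $\Lat(I)$ to $\Lat(A)$. You need the distributivity $I\cap\overline{\sum_k J_k}=\overline{\sum_k(I\cap J_k)}$. Alternatively, and more simply, argue directly with the open subset of $\Prim(A)$ corresponding to $I$, which is homeomorphic to $\Prim(I)$.
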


\begin{proof}
For $(i)\Rightarrow (ii)$, take $I\in\Lat(A)$ be a singly generated compact ideal. By the bijective correspondance $\Lat(A)\simeq\Lat(\Cu(A))$, we can find $x\in \Cu(A)$ such that $I=I_x$. Now, choose any $\ll$-increasing sequence $(x_n)_n$ in $\Cu(A)$ such that $\sup x_n=x$. Then $(I_{x_n})_n$ is a $\subseteq$-increasing sequence whose supremum is $I$. By the compactness of $I$, we deduce that there exists $n\in \N$ such that $I=I_{x_n}$. Lastly, choose $e_I\in A_+$ with $[e_I]=x_n$. It is easily observed that $[e_I]\in I_\ll$, that is, $e_I\in \Ped(I)_+$ and $e_I$ is full by construction.
 
For $(ii)\Rightarrow (i)$, take an increasing sequence $(I_n)_n$ in $\Lat(I)$ satisfying that $\sup I_n=I$. Since $[e_I]_{\Cu}\in \Cu(I)_\ll$, it follows that $e_I\in \bigcup I_n$ and hence, $e_I\in I_n$ for some $n\in \N$. As a result, $I_n=I$.
 
The equivalence between $(ii)$ and $(iii)$ is \cite[Proposition 3.1]{TT15}. The implication of $(ii)\Rightarrow (iv)$ is from \cite[Proposition 3.1]{TT15} together with Klee's theorem, stating that a cancellative topological Hausdorff cone is locally compact if and only if it has a compact base. See e.g. \cite[Theorem II.2.6]{A71}. 
\end{proof}

Let us denote the subset of \emph{continuous} maps of $\Lsc_\mathcal{C}(\TT(A))$ by $C_\mathcal{C}(\TT(A))$. (That is, the set of cone morphisms $f\colon\TT(A)\rightarrow [0,\infty]$ that are $(\tau_{iv},\tau_{\rm standard})$-continuous, where $[0,\infty]$ is equipped with the standard topology.) From the results obtained in \cite[\S 4]{MR23}, we know that for any $f\in C_\mathcal{C}(\TT(A))$, the support ideal $I_f$ is a compact element of $\Lat(A)$. Furthermore $f(\tau)\in (0,\infty)$, for any $\tau\in \TT_{I_f}(A)$ and $f(\tau)=\infty$ if and only if $I_\tau \nsupseteq I_f$. See \autoref{prop:computeideals}. 

Lastly, fix a compact ideal $I\in \Lat(A)$. Any continuous cone morphism $f\colon \TT_I(A)\rightarrow [0,\infty)$ such that $f^{-1}(\{0\})= \tau_{I}$, admits a unique continuous extension $\tilde{f}$, given by $\tilde{f}(\tau)=f(\tau+\tau_I)$ whenever $\tau+\tau_I\in \TT_I(A)$, and $\tilde{f}(\tau)=\infty$ otherwise. It can be checked that $\widetilde{f}$ belongs to $C_\mathcal{C}(\TT(A))$ and satisfies $I_f=I$. See the proof of \cite[Theorem 4.2]{MR23}.

We end this paragraph with the following proposition linking elements of Pedersen ideals of closed two sided ideals and elements of $\Lsc_\mathcal{C}(\TT(A))$ that are continuous.

\begin{prop}\label{prop:pepite} Let $A$ be a $\Ca$-algebra and let $a\in A_+$. Assume that $a \in \Ped(I_a)_+$. 

Then the restriction $\overline{a}_\mid\colon \TT_I(A)\rightarrow [0,\infty]$ is continuous, its image lies in $\R_+$, and we have $(\overline{a}_\mid)^{-1}(\{0\})=\tau_{I_a}$. 

Moreover, its unique continuous extension (as constructed above) coincides with $\overline{a}$.
\end{prop}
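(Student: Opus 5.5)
Write $I:=I_a$, the closed two-sided ideal generated by $a$, so that $\TT_I(A)$ is the relevant piece of the decomposition $\TT(A)=\bigsqcup_J \TT_J(A)$. The plan has three steps. First, apply \cite[Proposition 3.11 (i)]{ERS11}, recalled above, to get continuity and finiteness on $\TT_I(A)$. Second, compute the zero set. Third, compare $\overline{a}$ with the extension $\widetilde{\overline{a}_\mid}$ on each piece $\TT_J(A)$ with $J\neq I$.

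\textbf{Continuity and finiteness.} Every $\tau\in\TT_I(A)$ has $\varepsilon(\tau)=\tau_I$. By \autoref{prop:computeideals} (i), $I$ is then the ideal generated by $\{b\in A_+\mid \tau(b)<\infty\}$. A lower-semicontinuous trace is finite on the Pedersen ideal of the ideal it generates from its finite elements. Hence $\tau(a)<\infty$ for $a\in\Ped(I)_+$, and so $\overline{a}_\mid$ takes values in $\R_+$. Continuity follows directly: by \cite[Proposition 3.11 (i)]{ERS11}, $\tau_{iv}$-convergence on $\TT_I(A)$ is pointwise convergence on $\Ped(I)_+$.

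\textbf{Zero set.} We have $\tau_I(a)=0$ because $a\in I$. Conversely, suppose $\tau\in\TT_I(A)$ and $\tau(a)=0$. The set $\{b\in A_+\mid\tau(b)=0\}$ is the positive part of a closed ideal (use lower semicontinuity and the trace property). This ideal contains $a$, hence contains $I=I_a$. So $\tau$ vanishes on $I$. Since $\varepsilon(\tau)=\tau_I$, $\tau$ is infinite exactly off $I$, which gives $\tau=\tau_I$.

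\textbf{Identification of the extension.} I expect this step to be the main obstacle, since it needs careful bookkeeping of support ideals. $I$ is compact in $\Lat(A)$ by \autoref{prop:compactbase}, so the extension is available. By construction, $\widetilde{\overline{a}_\mid}(\tau)=\overline{a}(\tau+\tau_I)$ when $\tau+\tau_I\in\TT_I(A)$, and $=\infty$ otherwise. Note that $\tau_I(a)=0$, so $\overline{a}(\tau+\tau_I)=\tau(a)$. It therefore suffices to show two things:
\begin{itemize}
\item[(a)] $\tau+\tau_I\in\TT_I(A)$ if and only if $I_\tau\supseteq I$;
\item[(b)] $\tau(a)=\infty$ whenever $I_\tau\nsupseteq I$.
\end{itemize}
For (a), note that $\varepsilon(\tau+\tau_I)=\varepsilon(\tau)+\tau_I$, which corresponds to the ideal $I_\tau\cap I$. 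This equals $I$ exactly when $I_\tau\supseteq I$.

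For (b), suppose $\tau(a)<\infty$. Then $a$ lies in the generating set of $I_\tau$ described in \autoref{prop:computeideals} (i), so $I=I_a\subseteq I_\tau$. Taking the contrapositive gives (b), and the two functions agree everywhere.

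Alternatively, (b) can be phrased via the remark above: for $f\in C_\mathcal{C}(\TT(A))$, $f(\tau)=\infty$ if and only if $I_\tau\nsupseteq I_f$. Both functions have support ideal $I$, so they are infinite on the same set and agree on the rest.
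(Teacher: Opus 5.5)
Your proposal is correct and follows the paper's own proof: continuity from \cite[Proposition 3.11 (i)]{ERS11}, finiteness from $a\in\Ped(I_a)_+$, the zero set from the fullness of $a$ in $I_a$, and a direct comparison of the extension with $\overline{a}$. The paper calls that last comparison straightforward, and you spell it out correctly via (a) and (b). Drop your closing ``alternative'': it applies the remark on $C_\mathcal{C}(\TT(A))$ to $\overline{a}$, which presupposes that $\overline{a}$ is continuous on all of $\TT(A)$, and that is exactly what the identification is meant to establish.
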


\begin{proof}
As explained in the above paragraph, \cite[Proposition 3.11 (i)]{ERS11} gives the continuity of $\overline{a}_\mid$, the belonging of $a$ to the Pedersen ideal of $I$ implies that its image lies in $\R_+$, while the fullness of $a$ in $I_a$ yields $\tau(a)>0$ for any $\tau\in \TT_I(A)\setminus \{\tau_I\}$. We are left to show that the unique continuous extension obtained from \cite[Theorem 4.2]{MR23} is in fact $\overline{a}$ itself. 

Let us write $f:=\overline{a}_\mid$. Following the proof-steps of the cited result, we continuously extend $f$ to $\widetilde{f}\colon \TT(A)\rightarrow [0,\infty]$, by sending $\tau\mapsto f(\tau+\tau_{I_a})$, whenever $\epsilon(\tau)\leq \tau_{I_a}$ (i.e., whenever $I_\tau\supseteq I_a$) and $\tau\mapsto \infty$ otherwise. It is now straightforward that $\widetilde{f}$ coincides with $\overline{a}$.
\end{proof}
\end{prg}

\begin{prg} \textbf{Dual pairs of cones and the weak Schr\"{o}der-Simpson theorem}. Let us briefly expose another work of Keimel based on \cite{K15}, where he presents a conceptual approach to investigate possible generalizations of the Schr\"{o}der-Simpson theorem outside the valuation powerdomain. We refer the reader to \cite{K15} and the references therein for details on the Schr\"{o}der-Simpson theorem, and we focus on the point-wise (weaker) version obtained by Keimel, in the broader context of dual pairs of cones, which is of interest to us. 

By a \emph{dual pair of cones}, we mean a triple $(C,D,\omega)$, where $C,D$ are cones and $\omega\colon C\times D\rightarrow [0,\infty]$ is bilinear, (i.e., a cone morphism in each variable) and non-singular (i.e., for any $d,d'\in D$ with $d\neq d'$, there exists $c\in C$ such that $\omega(c,d)\neq \omega(c,d'))$. We endow the cone $D$ with the \emph{weak upper} topology $\tau_\omega$, defined as the coarsest topology on $D$ rendering lower-semicontinuous any evaluation map $\ev_c\colon D\rightarrow [0,\infty]$, sending $d\mapsto \omega(c,d)$, for all $c\in C$. In this broader context, Keimel has been able to prove the following.

\begin{thm}[{\cite[Theorem 4.3]{K15}}] Let $(C,D,\omega)$ be a dual pair of cones. Any lower-semicontinuous function $f\colon D\rightarrow [0,\infty]$ (i.e., any $(\tau_\omega,\tau_{\rm Scott})$-continuous function) is the point-wise supremum of evaluation maps of elements in $C$.

More particularly, we have $f(d)=\sup\{\omega(c,d) \mid \ev_c\leq f\}$, for any $d\in D$.
\end{thm}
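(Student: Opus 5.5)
The inequality $f(d)\geq \sup\{\omega(c,d)\mid \ev_c\leq f\}$ holds by definition, so the whole task is the reverse inequality. Fix $d\in D$ and a real $r<f(d)$. The plan is to produce some $c\in C$ with $\ev_c\leq f$ and $\omega(c,d)>r'$, where $r'$ is any prescribed number in $(r,f(d))$.

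\emph{Step 1: a subbasic neighbourhood.} Since $f$ is $(\tau_\omega,\tau_{\rm Scott})$-continuous, the set $U:=f^{-1}((r,\infty])$ is a $\tau_\omega$-open neighbourhood of $d$. The topology $\tau_\omega$ is generated by the subbasic sets $\ev_c^{-1}((s,\infty])$, so there are $c_1,\dots,c_n\in C$ and reals $s_1,\dots,s_n$ with
\[
d\in V:=\bigcap_{k=1}^n\{x\in D\mid \omega(c_k,x)>s_k\}\subseteq U .
\]
By rescaling each $c_k$ (using homogeneity of $\omega$) I may take $s_k=1$, so $\omega(c_k,d)>1$ for every $k$.

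\emph{Step 2: the key point, passing from an intersection to a single evaluation.} I need one $c\in C$ with $\ev_c\leq f$ and $\omega(c,d)$ large. The natural candidate is a ``minimum-like'' combination of the $c_k$, scaled by $r$. The finite intersection $V$ is only a lattice-theoretic object, while $C$ only has sums and scalings, so Keimel's argument presumably uses a minimax/separation step. Consider the function $g:=r\cdot\chi_V$, which is lower semicontinuous and satisfies $g\leq f$. The goal becomes to approximate $g$ from below at the point $d$ by some $\ev_c$. I would apply a sandwich or Hahn--Banach type theorem for cones, in the style of Roth and Keimel. Namely, I would look for a linear functional on the cone generated by $\ev_{c_1},\dots,\ev_{c_n}$, viewed via the convex set $\{(\omega(c_k,\cdot))_k\}$, and use the minimax theorem on the simplex $\Delta_n$ of convex weights $\lambda$:
\[
\sup_{\lambda\in\Delta_n}\ \inf_{x}\ \Bigl(\textstyle\sum_k\lambda_k\omega(c_k,x)-\ldots\Bigr).
\]
The minimax is needed because bilinearity turns $\lambda\mapsto \sum\lambda_k\omega(c_k,x)$ into an affine map, so the usual compactness of $\Delta_n$ applies. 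The aim is to find weights $\lambda$ with $\sum_k\lambda_k\,\omega(c_k,\cdot)\leq$ a multiple of $f$ everywhere, while $\sum_k\lambda_k\,\omega(c_k,d)$ remains comparable to $r$. Since $f$ need not be linear, I would first reduce to the lower semicontinuous superlinear (or linear) case. For the pointwise statement at a fixed $d$, it should suffice to compare with the segment through $d$, exploiting that $f$ is Scott-continuous along rays $t\mapsto td$. I expect this step to be the main obstacle, and to require Keimel's specific minimax lemma for cones.

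\emph{Step 3: conclusion.} Taking $c:=\sum\lambda_k c_k$ (scaled), one gets $\ev_c\leq f$ and $\omega(c,d)>r$. Letting $r\uparrow f(d)$ then gives equality. Non-singularity is only needed to ensure $\tau_\omega$ is $T_0$, so that the supremum identifies $f$ uniquely.
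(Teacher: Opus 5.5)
\textbf{There is a genuine gap, in two parts: the generality you aim for is false, and the decisive step is not carried out.}

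The paper does not prove this statement; it quotes Keimel. Keimel's theorem is about lower-semicontinuous \emph{linear} functionals on $D$. That is also the only way the paper uses it: in \autoref{cor:SS}, $f\in\Lsc_{\CC}(\TT(A))$ is a cone morphism. The hypothesis cannot be dropped, so the reduction you propose in Step~2 (``since $f$ need not be linear, first reduce to the superlinear or linear case'', via rays through $d$) cannot exist. Take $C=D=\R_+^2$ with $\omega(c,x)=c_1x_1+c_2x_2$ and $f(x)=\min(x_1,x_2)$. Then $f$ is superlinear and $\tau_\omega$-lower-semicontinuous, since its superlevel sets $\{x_1>r\}\cap\{x_2>r\}$ are finite intersections of subbasic sets. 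But $\ev_c\le f$, evaluated at $(1,0)$ and $(0,1)$, forces $c=0$, so the supremum is $0\neq f(1,1)$. In general any supremum of maps $\ev_c$ is sublinear, so some convexity of $f$ is unavoidable. Since $\ev_c\le f$ is a constraint on all of $D$, nothing done along the ray through $d$ can replace it.

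Even for linear $f$, Step~2 is where the proof is missing. The comparison function $g=r\chi_V$ is the wrong object: $V$ is stable under $x\mapsto tx$ for $t\ge 1$, so the only cone morphism below $r\chi_V$ is $0$.

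\emph{Globalize by homogeneity.} If $0<t<\min_k\omega(c_k,x)$, then $x/t\in V\subseteq U$, so $f(x)>rt$. Hence $f\ge r\min_k\omega(c_k,\cdot)$ on all of $D$.

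\emph{Convexify by minimax.} Your instinct (minimax over $\Delta_n$, then $c=\sum_k\lambda_kc_k$) is the right one, applied to
\[F(\lambda,x)=r\textstyle\sum_k\lambda_k\,\omega(c_k,x)-f(x),\qquad \lambda\in\Delta_n,\ x\in D_0:=\{f<\infty\}.\]
$F$ is affine and lower-semicontinuous in $\lambda$. It is affine in $x$ on the subcone $D_0$ (concavity, i.e.\ $f$ sublinear, would suffice). By the previous step, $\sup_x\min_\lambda F\le 0$. This yields $\lambda\in\Delta_n$ with $r\sum_k\lambda_k\omega(c_k,\cdot)\le f$.

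Concretely, you can get $\lambda$ by separating the convex cone $\{y\in\R^n\mid \exists x\in D_0,\ y_k\le r\,\omega(c_k,x)-f(x)\ \forall k\}$ from the open positive orthant. Any index $k$ with $\omega(c_k,x)=\infty$ for some $x\in D_0$ is then forced to have weight $0$, which disposes of infinite values. With $c:=r\sum_k\lambda_kc_k$ you get $\ev_c\le f$ and $\omega(c,d)\ge r\min_k\omega(c_k,d)>r$, which is your Step~3.

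Minor point: subbasic sets with $s_k\le 0$ cannot be rescaled to $s_k=1$, but they can be shrunk or dropped, since $\omega(c_k,d)>s_k$.
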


Let us transcribe the above theorem in our setting. For a $\Ca$-algebra, we consider the map $\omega_A\colon \mathcal{R}(A)\times \TT(A)\rightarrow [0,\infty]$, sending $(\overline{a},\tau)\mapsto\tau(a)$. It is readily checked that $(\mathcal{R}(A),\TT(A),\omega_A)$ is a well-defined dual pair of cones, and that the weak upper topology $\tau_\omega$ on $\TT(A)$ coincides the weak*-upper topology $\tau_{up}$ on $(A_+,+,\pprec)^*$ considered in \autoref{prg:Keimel}. As a consequence, we immediately obtained the following corollary.

\begin{cor}\label{cor:SS} Let $A$ be a $\Ca$-algebra. Any $f\in \Lsc_\mathcal{C}(\TT(A))$ is the point-wise supremum of evaluation maps of elements in $(A\otimes \mathcal{K})_+$. 

More particularly, we have $f(\tau)=\sup_{a\in (A\otimes \mathcal{K})_+}\{\tau(a) \mid \overline{a}\leq f \}$, for any $\tau\in \TT(A)$.
\end{cor}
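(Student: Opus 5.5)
The plan is to deduce \autoref{cor:SS} directly from Keimel's theorem \cite[Theorem 4.3]{K15} applied to the dual pair $(\mathcal{R}(A),\TT(A),\omega_A)$. Three things must be checked: that this triple is a genuine dual pair of cones; that $\tau_\omega$ coincides with $\tau_{up}$ on $\TT(A)\simeq (A_+,+,\pprec)^*$; and that every $f\in\Lsc_\mathcal{C}(\TT(A))$ is $(\tau_\omega,\tau_{\rm Scott})$-continuous.

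\textbf{Dual pair.} Well-definedness of $\omega_A$ follows from \autoref{thm:R09}: $\overline{a}=\overline{b}$ exactly when $\tau(a)=\tau(b)$ for all $\tau$, using the identification $\TT(A\otimes\mathcal{K})\simeq\TT(A)$ so that $\tau(a)$ makes sense for $a\in(A\otimes\mathcal{K})_+$.
\begin{itemize}
\item[(a)] Linearity in $\TT(A)$ is the pointwise cone structure.
\item[(b)] Linearity in $\mathcal{R}(A)$ follows from $[a]_{\rm R}+[b]_{\rm R}=[a\oplus b]_{\rm R}$ and $r[a]_{\rm R}=[ra]_{\rm R}$, together with additivity of the extended trace.
\item[(c)] Non-singularity holds because traces are functions on $A_+\subseteq (A\otimes\mathcal{K})_+$: two distinct traces differ at some $a\in A_+$, hence at $\overline{a}$.
\end{itemize}

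\textbf{Comparing topologies.} The topology $\tau_\omega$ is generated by the sets $\{\tau\mid \tau(a)>r\}$ for $a\in(A\otimes\mathcal{K})_+$. The topology $\tau_{up}$ is generated by the same sets with $a\in A_+$. One inclusion is immediate. For the other, write $a\in(A\otimes\mathcal{K})_+$ as a limit of elements of $M_n(A)_+$. By \autoref{prop:basisR}, the class $\overline{a}$ is a supremum of an increasing sequence $\overline{a_k}$ with $a_k\in A_+$. Hence
\[
\{\tau\mid\tau(a)>r\}=\bigcup_k\{\tau\mid\tau(a_k)>r\},
\]
which is $\tau_{up}$-open. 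So $\tau_\omega=\tau_{up}$.

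\textbf{Continuity of $f$.} This is the step I expect to be the main obstacle. An element $f\in\Lsc_\mathcal{C}(\TT(A))$ is by definition lower-semicontinuous only for the finer topology $\tau_{iv}$. The Claim in the proof of \autoref{thm:commutative} resolves this: an order-preserving map is $\tau_{up}$-lower-semicontinuous if and only if it is $\tau_{iv}$-lower-semicontinuous. Cone morphisms on $\TT(A)$ are order-preserving, since the pointwise order there is the algebraic one \cite{ERS11}. Therefore $f$ is $(\tau_\omega,\tau_{\rm Scott})$-continuous.

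\textbf{Conclusion.} Keimel's theorem now gives
\[
f(\tau)=\sup\{\tau(a)\mid a\in(A\otimes\mathcal{K})_+,\ \overline{a}\leq f\}
\]
for every $\tau\in\TT(A)$, which is the stated formula.
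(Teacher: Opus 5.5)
Your proposal is correct and follows the paper's own route. Like the paper, you apply Keimel's weak Schr\"{o}der--Simpson theorem to the dual pair $(\mathcal{R}(A),\TT(A),\omega_A)$, identify $\tau_\omega$ with $\tau_{up}$, and invoke the Claim from the proof of \autoref{thm:commutative} in its intended direction (order-preserving $\tau_{iv}$-lower-semicontinuous maps are $\tau_{up}$-lower-semicontinuous) to see that $f$ is $\tau_\omega$-lower-semicontinuous; you simply spell out the routine checks (non-singularity, the density argument for $\tau_\omega=\tau_{up}$, order-preservation of cone morphisms) that the paper calls readily checked.
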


\begin{proof} As already explained in the proof of \autoref{thm:commutative}, we have $\Lsc_\mathcal{C}((\TT(A),\tau_{iv})=\Lsc_\mathcal{C}((\TT(A),\tau_{up})$, which now also agrees with $\Lsc_\mathcal{C}((\TT(A),\tau_{\omega})$.
\end{proof}
\end{prg}

We have now all the theory needed to exhibit our second criterion for tracial reflexiveness. Let us start with a lemma.

\begin{thm}
Let $A$ be a separable $\Ca$-algebra. Assume that for any $I\in \Lat(A)$, we have that $\Ped(A)_+\cap I\subseteq \Ped(I)_+$.

Then $A$ is tracially reflexive.
\end{thm}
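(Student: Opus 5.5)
We must show that every $f\in \Lsc_\mathcal{C}(\TT(A))$ is of the form $\overline{a}$ for some $a\in (A\otimes\mathcal{K})_+$, i.e. that $f\in\mathcal{R}(A)$. Since $\mathcal{R}(A)$ is a $\Cu$-cone sitting inside $\Lsc_\mathcal{C}(\TT(A))$ and closed under suprema of increasing sequences, it suffices to write $f$ as the supremum of an increasing sequence of elements of $\mathcal{R}(A)$. The starting point is \autoref{cor:SS}, which gives
\[
f(\tau)=\sup\{\tau(a)\mid a\in (A\otimes\mathcal{K})_+,\ \overline{a}\leq f\}.
\]
The family $D_f:=\{\overline{a}\mid \overline{a}\leq f\}$ is a subset of $\mathcal{R}(A)$. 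If we can show that $D_f$ is upward directed, then separability together with the fact that $\mathcal{R}(A)$ is a countably based $\Cu$-cone (so that directed sets have countable cofinal increasing sequences) produces an increasing sequence in $D_f$ whose supremum is $f$. This would finish the proof.

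First, we replace $D_f$ by a ``nice'' subfamily. By \autoref{rmk:triangle} and \autoref{prop:basisR}, every $\overline{a}$ is a supremum of elements $\overline{b}$ with $\overline{b}\triangleleft \overline{a}$ and $b\in A_+$ (after passing to matrices, which preserves the hypothesis via $A\otimes\mathcal{K}$ having the same ideal lattice). Up to the cut-downs $(b-\epsilon)_+$, we can further assume that $b\in \Ped(A)_+$. Indeed, $[(b-\epsilon)_+]\ll[b]$ in $\Cu(A)$, so $(b-\epsilon)_+$ lies in the Pedersen ideal. This is where the hypothesis enters: $b\in \Ped(A)_+\cap I_b\subseteq \Ped(I_b)_+$. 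By \autoref{prop:compactbase}, $I_b$ is then compact, and by \autoref{prop:pepite}, $\overline{b}$ is the continuous extension of a continuous real-valued function on $\TT_{I_b}(A)$ that vanishes only at $\tau_{I_b}$. In particular, $\overline{b}\in C_\mathcal{C}(\TT(A))$. So $f$ is the pointwise supremum of elements $g\in C_\mathcal{C}(\TT(A))\cap\mathcal{R}(A)$ with $g\triangleleft f$ in the sense of being strictly below, namely $g\leq(1-\epsilon)f$.

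The main step is directedness: given two such $g_1=\overline{b_1}$ and $g_2=\overline{b_2}$ below $f$, we must find $\overline{c}\le f$ with $\overline{c}\ge (1-\delta)g_i$ up to an arbitrarily small loss. The plan is to work on the ideal $J=I_{b_1}+I_{b_2}$, which is compact because it is a join of compact elements. On the locally compact cone $\TT_J(A)$, which has compact base $T_{e_J}(A)$, both $g_i$ restrict to continuous affine functions that are strictly below the lower semicontinuous affine function $f$. We then use a Dini/Edwards-type separation on the compact convex base. The Choquet-simplex structure from \cite{MR23} gives Riesz interpolation among continuous affine functions below $f$, which yields a continuous affine $h$ with $\max(g_1,g_2)\cdot(1-\delta)\leq h\leq f$ on the base. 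Extend $h$ by \cite[Theorem 4.2]{MR23} to $\widetilde h\in C_\mathcal{C}(\TT(A))$. It then remains to show that $\widetilde h\in\mathcal{R}(A)$. For this we invoke \cite[Theorem 5.3]{MR23}-style arguments, or directly \cite[Theorem 5.12]{ERS11}: continuous functions with compact support ideal that are strictly dominated, in the $\triangleleft$ sense, by sums of the $\overline{b_i}$ are realised by positive elements of $\Ped(J)\otimes\mathcal{K}$.

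I expect the main obstacle to be this interpolation and realisation step. One must check that Riesz interpolation holds for continuous cone morphisms on $\TT_J(A)$ against a merely lower semicontinuous $f$ that may be infinite on parts of $\TT(A)\setminus\TT_J(A)$. One must also check that the interpolant is realised by an element of $\mathcal{R}(A)$. The Pedersen hypothesis is what guarantees that every approximating $\overline{b}$ is continuous with compact support ideal. This in turn makes the restrictions to the pieces $\TT_J(A)$ behave like the unital, compact-base situation, where such interpolation is classical.
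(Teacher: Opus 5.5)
Up to the directedness step your argument is the paper's. Both use \autoref{cor:SS}, cut-downs into the Pedersen ideal, and the hypothesis combined with \autoref{prop:pepite} to show that the continuous $g\le(1-\epsilon)f$ (the paper's $H_f$) have pointwise supremum $f$; directedness and countable basedness of $\mathcal{R}(A)$ then finish. The paper does not prove that $H_f$ is upward directed; it imports this from the proof of \cite[Theorem 4.4]{MR23}.

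Your own argument for directedness fails as written. Interpolating on the compact base of $\TT_J(A)$ subject to $h\le f$ there does not give $\widetilde h\le f$ on $\TT(A)$. You need $\widetilde h(\tau)=h(\tau+\tau_J)\le f(\tau)$, whereas your constraint only yields $h(\tau+\tau_J)\le f(\tau)+f(\tau_J)$, and $f(\tau_J)$ may be $\infty$. Concretely, let $A=\mathbb{C}\oplus\mathbb{C}$, so $\TT(A)\cong[0,\infty]^2$, and take $f(s,t)=s+t$ and $g_1=g_2\colon (s,t)\mapsto s/2$. Then $J=\mathbb{C}\oplus 0$, $\tau_J=(0,\infty)$, $\TT_J(A)=[0,\infty)\times\{\infty\}$, and $f\equiv\infty$ on $\TT_J(A)$. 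So $h(s,\infty)=5s$ satisfies all your requirements, yet $\widetilde h(1,0)=5>1=f(1,0)$.

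The correct upper constraint on $\TT_J(A)$ is $\hat f(\sigma):=\inf\{f(\tau)\mid \tau\in\TT(A),\ \tau+\tau_J=\sigma\}$. You do have $g_i\le(1-\epsilon)\hat f$, because $g_i(\tau+\tau_J)=g_i(\tau)$. Moreover $h\le\hat f$ would give $\widetilde h\le f$: where $\tau+\tau_J\notin\TT_J(A)$ one has $J\not\subseteq I_\tau$, hence $\tau(b_1+b_2)=\infty$ and $f(\tau)=\infty$. But to apply an Edwards-type separation you need $\hat f$ to be lower semicontinuous and affine on the base, and the base to be a Choquet simplex. Lower semicontinuity of an infimum over the fibres of $\tau\mapsto\tau+\tau_J$ is not automatic. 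That is precisely the missing content, which you must either prove or cite from \cite{MR23} as the paper does.

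On the other hand, the ``realisation'' step you flag as an obstacle is free. Every element of $C_\mathcal{C}(\TT(A))$ lies in $\LL(\TT(A))=\mathcal{R}(A)$ by the definition of $\LL$ (take a constant sequence), which is how the paper obtains $H_f\subseteq R_f$.
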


\begin{proof}
Let $f\in \Lsc_\mathcal{C}(\TT(A))$. We aim to find an element $a\in A\otimes\mathcal{K}_+$ such that $\overline{a}=f$. We argue similarly as in the proof of \cite[Theorem 4.4]{MR23}, and we proceed in several steps. Consider the following sets.
\[
\begin{array}{ll} 
P_f:=\{ a\in \Ped(A\otimes\mathcal{K})_+\mid \overline{a}\leq (1-\epsilon)f \text{ for some }\epsilon >0\}.\\
H_f:=\{ h\in C_\mathcal{C}(\TT(A)) \mid h\leq (1-\epsilon)f \text{ for some }\epsilon >0\}.\\
R_f:=\{ a\in A\otimes\mathcal{K}_+\mid \overline{a}\leq f\}.
\end{array}
\]

We first observe that $P_f\subseteq H_f\subseteq R_f$. The first inclusion follows from \autoref{prop:pepite} together with the hypothesis applied to $I_a$. (That is, $\Ped(A)_+\cap I_a\subseteq\Ped(I_a)_+$.) The second inclusion follows from the fact that $C_{\mathcal{C}}(\TT(A))\subseteq \{\overline{a} \mid a\in A\otimes \mathcal{K}_+\}$. (See \autoref{cor:triangle}.)

Now, let $\tau\in \TT(A)$. It follows from the above inclusions that \[\sup_{a\in P_f}\{\tau(a)\}\leq \sup_{h\in H_f}\{h(\tau)\}\leq \sup_{a\in R_f}\{\tau(a)\}\leq f(\tau).\] 

We shall prove that all the above terms are in fact equal. By the weak Schr\"{o}der-Simpson theorem recalled in \autoref{cor:SS}, we know that $\sup_{a\in R_f}\{\tau(a)\}= f(\tau)$. It now suffices to show that $\sup_{a\in R_f}\{\tau(a)\}\leq \sup_{a\in P_f}\{\tau(a)\}$ to get  the desired equalities. Let $a\in R_f$. Without loss of generality, we can assume that $\overline{a}\leq (1-\epsilon)f$, for some $\epsilon>0$. Further, the lower-semicontinuity of $\tau$ implies that $\tau(a)=\sup_{\delta>0} \tau((a-\delta)_+)$. Since $(a-\delta)_+$ belongs to $\Ped(A\otimes\mathcal{K})_+$ for any $\delta>0$, it results that $\tau(a)\leq \sup_{a\in P_f}\{\tau(a)\}$ and hence, $\sup_{a\in R_f}\{\tau(a)\}=\sup_{a\in P_f}\{\tau(a)\}$. A fortiori, we conclude that \[\sup_{h\in H_f}\{h(\tau)\}=f(\tau)\] for any $\tau\in \TT(A)$.

Subsequently, it is shown in (the proof of) \cite[Theorem 4.4]{MR23} that $H_f$ is an upward-directed set. Also, it is a standard fact that any upward-directed subset of a countably-based $\Cu$-semigroup has a supremum. (See e.g. \cite[Lemma 2.6]{APRT22}.) Since $A$ is separable, $\mathcal{R}(A)\simeq \Cu(A)\otimes [0,\infty]$ is a countably-based $\Cu$-semigroup. Henceforth, $H_f$ has a supremum $h_f$ in $\mathcal{R}(A)$, which straightforwardly satisfies that $h_f(\tau)=\sup_{h\in H_f}\{h(\tau)\}$. The proof follows after noticing that $h_l=\overline{a}$, for some $a\in A\otimes\mathcal{K}_+$.
\end{proof}

\subsection{Closing Remarks} 
\begin{prg}\textbf{Conjecture for stable rank one $\Ca$-algebras}. The second criteria for tracial reflexiveness can be detected in the Cuntz semigroup. More particularly, for any ideal $I$ in a $\Ca$-algebra $A$, the condition $\Ped(A)_+\cap I\subseteq \Ped(I)_+$ is met, whenever we have $\Cu(A)_\ll \cap \Cu(I)=\Cu(I)_\ll$. Although it is always true that $\Cu(A)_\ll\cap \Cu(I)\supseteq\Cu(I)_\ll$, the reverse inclusion may fail. However, in the stable rank case, it has been shown that the Cuntz semigroup of $A$ satisfies nice properties, such as the \emph{Riesz interpolation property} and \emph{stably finiteness} -i.e., $\Cu(A)_\ll\subseteq \Cu(A)_{\rm finite}$-.  We hint that a combination of the aforementioned properties could provide a proof that $\Cu(A)_\ll\cap \Cu(I)=\Cu(I)_\ll$, for any ideal of a stable rank one $\Ca$-algebra, thus yielding a proof of the following conjecture.\\

(\textbf{Conjecture}) Any stable rank one $\Ca$-algebra is tracially reflexive.
\end{prg} 

\begin{prg}\textbf{Link with tracial orderedness}. 
Does tracial reflexiveness imply tracial orderedness for $\Ca$-algebras with a non-empty tracial state space? 
We sketch an idea for a potential proof, in the specific case where $A\ll A$ in $\Lat(A)$. (E.g., $A$ is unital or simple.)

Let $A$ be a tracially relfexive $\Ca$-algebra such that $\TT_1(A)\neq \emptyset$.
We identify $\TT_1(A)$ with a subset of $(\TT(A),\tau_{iv})$ and we first show that the induced topology agrees with the weak*-topology. Endowed the weak*-topology, it is well known that $\TT_1(A)$ is a compact (convex) Hausdorff space. Endowed with the relative (interval) topology, $\TT_1(A)$ is also Hausdorff. Now, consider the cone $\TT_A(A)$ of densely defined traces. Observe that $\TT_1(A)\subseteq \TT_A(A)\subseteq \TT(A)$. 
By \cite[Proposition 3.11 (i)]{ERS11}, we know that the topology induced by $\tau_{iv}$ on $\TT_A(A)$ coincides with the topology of point-wise convergence on the positive elements of the Pedersen ideal of $A$. We now see that the topology induced by $\tau_{iv}$ on $\TT_1(A)$ is coarser than the weak*-topology. As a result, both topologies on $\TT_1(A)$ are Hausdorff and compact. Finally, since they compare on the lattice of topologies, they must coincide.

 Now let $f\in \Aff\TT_1(A)_+$ be a (weak*)-continuous map. Then $f$ is a $\tau_{iv}$-continuous map. We first extend it to a $\tau_{iv}$-continuous map on $\TT_A(A)$, via any compact base $\TT_e(A)\simeq \TT_1(A)$ obtained from \autoref{prop:compactbase}. Then on, we use the unique continuous extension recalled before \autoref{prop:pepite}, to obtain a $\tau_{iv}$-continuous map $\tilde{f}$ on $\TT(A)$, extending $f$. 
 
 Lastly, since $A$ is tracially reflexive, there exists $a\in (A\otimes\mathcal{K})_+$ such that $\overline{a}=\tilde{f}$. We are only left to prove that $a$ can be taken in $A_+$ to be able to conclude.
\end{prg}

\begin{prg}\textbf{Classification of *-homomorphisms between $\mathcal{W}$-stable $\Ca$-algebras}. L. Robert has conjectured that the (scaled) cone of lower-semicontinuous traces $\TT\colon \Ca\rightarrow \mathcal{C}_{\rm Choq}$ classifies separable nuclear $\Ca$-algebras, up to $\mathcal{W}$-tensorisation. That is, for any separable nuclear $\Ca$-algebras $A$ and $B$, we would have $\TT(A)\simeq \TT(B)$ if and only if $A\otimes \mathcal{W}\simeq B\otimes \mathcal{W}$. 

(This version of the conjecture would be possible only under the mild condition of preserving a scaling element. Otherwise, the conjecture should be stated up to \emph{stable} isomorphism.)

For the class of tracially reflexive $\Ca$-algebras, we see that the functors $\TT$, $\Lsc_\mathcal{C}\circ\TT$ and $\Cu$ can be recovered from one another. (See e.g. \cite[\S 5.3]{C21a}, for more on the notion of {recovering functors}.) More precisely, it follows from \cite{ERS11} that $\Cu$ recovers $\TT$, and from construction that $\TT$ recovers $\Lsc_\mathcal{C}\circ \TT$. Also, for any $\mathcal{W}$-stable tracially reflexive $\Ca$-algebra, we have that $\Lsc_\mathcal{C}(\TT(A))\simeq \Cu(A)\otimes [0,\infty]\simeq \Cu(A)$. As a result, having classification of these $\Ca$-algebras and their *-homomorphisms in mind, these invariants contain the same information. 

As a consequence, combining the results contained in \cite{R12} and \cite{S12} with the above, we deduce that the (scaled) Cuntz semigroup (or equivalently, either of the scaled functors $\TT$ or $\Lsc_\mathcal{C}\circ\TT$) classifies *-homomorphisms from $A\otimes \mathcal{W}\rightarrow B\otimes \mathcal{W}$, where $A\in \AF$ and $B\in \AH$. (See e.g. \cite{C25}, for more on the notion of \emph{classifying *-homomorphisms}.)

Lastly, tracial reflexiveness allows for a metric on the set morphisms. More particularly, for any *-homomorphisms $\phi,\psi\colon A\rightarrow B$ between $\mathcal{W}$-stable tracially reflexive $\Ca$-algebras, we know that $\Lsc_\mathcal{C}(\TT(A))$ is a $\Cu$-cone. Therefore, we are able to compare $\alpha:=\Lsc_\mathcal{C}(\TT(\phi))$ and $\beta:=\Lsc_\mathcal{C}(\TT(\psi))$ as follows.
\[d(\alpha,\beta):= \inf \Bigl\{ r>0\mid \forall  U\in\mathcal{O}(\TT(A)),\, \alpha(\mymathbb{1}_{U})\leq\beta(\mymathbb{1}_{U_{r}})  \text{ and }  \beta(\mymathbb{1}_{U})\leq\alpha(\mymathbb{1}_{U_{r}})\Bigr\}\]
where $\mathcal{O}(\TT(A)):=\{\text{Open sets of }\TT(A)\}$ and $U_r:=\underset{x\in U}{\cup}B_r(x)$. If the infimum does not exist, we set the value to $\infty$. 

(See e.g. \cite{C24} for an explicit use of such metric. See also \cite{MR23} for an explicit sub-basis of open sets for the interval topology on $\TT(A)$.) This would allow to not only classify *-homomorphisms between specific  $\mathcal{W}$-stable tracially reflexive $\Ca$-algebras, but also compute how far they are from each other.
\end{prg}


\begin{thebibliography}{99}
\bibitem{A71}
M.~Alfsen.
\newblock Compact convex sets and boundary integrals.
\newblock Springer-Verlag, New York, 1971. Ergebnisse
der Mathematik und ihrer Grenzgebiete, Band 57.

\bibitem{APRT21}
R.~Antoine, F.~Perera, L.~Robert, and H.~Thiel.
\newblock Edwards' condition for quasitraces on {$\mathrm{C}^*$}-algebras.
\newblock {\em Proc. Roy. Soc. Edinburgh Sect. A}, 151(2):525--547, 2021.

\bibitem{APRT22}
R.~Antoine, F.~Perera, L.~Robert, and H.~Thiel.
\newblock $\mathrm{C}^*$-algebras of stable rank one and their {C}untz
  semigroups.
\newblock {\em Duke Math. J.}, 171(1):33--99, 2022.




\bibitem{APT18}
R.~Antoine, F.~Perera, and H.~Thiel.
\newblock Tensor products and regularity properties of {C}untz semigroups.
\newblock {\em Mem. Amer. Math. Soc.}, 251(1199):viii+191, 2018.
%
%

\bibitem{C21a}
L.~Cantier.
\newblock A unitary {C}untz semigroup for {$\mathrm{C}^*$}-algebras of stable
  rank one.
\newblock {\em J. Funct. Anal.}, 281(9):109175, 2021.





\bibitem{C24} L.~Cantier. 
\newblock On the Nielsen-Thomsen sequence.
\newblock Preprint. arXiv:2412.11975, 2024

\bibitem{C25}
L.~Cantier.
\newblock Towards a classification of unitary elements of
  $\mathrm{C}^*$-algebras.
\newblock {\em Int. Math. Res. Not. IMRN}, 7, 1--19, 2025.


\bibitem{CGSTW21}
J. R. Carri\'on, J. Gabe, C. Schafhauser, A. Tikuisis and S. White
\newblock Classifying *-homomorphisms I: Unital simple nuclear $\mathrm{C}^*$-algebras
\newblock Preprint. arXiv:2307.06480, 2023.




\bibitem{CEI08}
K.~T. Coward, G.~A. Elliott, and C.~Ivanescu.
\newblock The {C}untz semigroup as an invariant for {$\mathrm{C}^*$}-algebras.
\newblock {\em J.\ Reine Angew.\ Math.}, 623:161--193, 2008.

\bibitem{C78}
J.~Cuntz.
\newblock Dimension functions on simple {$\mathrm{C}^*$}-algebras.
\newblock {\em Math. Ann.}, 233(2):145--153, 1978.

\bibitem{CP79}
J.~Cuntz and G. K.~Pedersen
\newblock Equivalence and traces on {$\mathrm{C}^*$}-algebras.
\newblock {\em J. Funct. Anal.}, 33(2):135--164, 1979.


\bibitem{EGLN21} G. A. Elliott, G. Gong, H. Lin and Z. Niu. 
\newblock On the classification of simple amenable C*-algebras with finite decomposition rank, II.
\newblock {\em J. Noncommut. Geom.}. In press. \url{10.4171/JNCG/560}, 2024.


\bibitem{ERS11}
G.~A. Elliott, L.~Robert, and L.~Santiago.
\newblock The cone of lower-semicontinuous traces on a
  {$\mathrm{C}^*$}-algebra.
\newblock {\em Amer. J. Math.}, 133(4):969--1005, 2011.

\bibitem{F92}
A. Fiech. 
\newblock Colimits in the category CPO.
\newblock Technical Report, Kansas State University, 1992.

\bibitem{GM26}
J. Gabe and A. Miller. 
\newblock Self-adjoint traces on the Pedersen ideal of $\mathrm{C}^*$-algebras.
\newblock {\em Publ. Mat.}, 2026. To appear

\bibitem{GP24}
E.~Gardella and F.~Perera.
\newblock The modern theory of {C}untz semigroups of $\mathrm{C}^*$-algebras.
\newblock {\em EMS Surv. Math. Sci.}, 2024.

\bibitem{GHKLMS03}
G.~Gierz, K. H.~Hofmann, K.~Keimel, J. D.~Lawson, M.~Mislove, and D. S.~Scott
\newblock Continuous lattices and domains.
\newblock Encyclopedia of Mathematics and its Applications 93, Cambridge University Press, Cambridge, 2003.

\bibitem{GLN1}
G. Gong, H. Lin, and Z. Niu.
\newblock A classification of finite simple amenable $\mathcal{Z}$-stable C*-algebras. I: C*-algebras with generalized tracial rank one.
\newblock {\em C. R. Math. Acad. Sci. Soc. R. Canada}, 42, pp. 63-450, 2020.

\bibitem{GLN2} 
G. Gong, H. Lin and Z. Niu.
\newblock A classification of finite simple amenable $\mathcal{Z}$-stable C*-algebras, II: C*-algebras with rational generalized tracial rank one.
\newblock {\em C. R. Math. Acad. Sci. Soc. R. Canada}, 42, pp. 451-539, 2020.

\bibitem{J25}
B.~Jacelon.
\newblock Quantum metric Choquet simplices.
\newblock {\em Publ. Math.}, 2025. To appear.

\bibitem{K15}
K.~Keimel.
\newblock Weak upper topologies and duality for cones.
\newblock {\em Log. Methods Comput. Sci.}, Vol. 11(3:21), pp. 1--14, 2015.

\bibitem{K17}
K.~Keimel.
\newblock The Cuntz semigroup and domain theory.
\newblock {\em Soft Comput.}, 21:2485--2502, 2017.

\bibitem{MR23}
M.~Moodie and L.~Robert.
\newblock Cones of traces arising from AF $\mathrm{C}^*$-algebras.
\newblock {\em Doc. Math.}, 28(6):1279--1321, 2023.

\bibitem{NTV26} 
\newblock P. W. Ng, H. Thiel, E. Vilalta. 
\newblock The Global Glimm Property for C*-algebras of topological dimension zero. Bull. London Math. Soc, 2026. (To appear)


\bibitem{PR07}
C. Pasnicu and M. R{\o}rdam. 
\newblock Purely infinite $\mathrm{C}^*$-algebras of real rank zero.
\newblock J. Reine Angew. Math. 613, 51--73, 2007.

\bibitem{P79}
G. K. Pedersen. 
\newblock $\mathrm{C}^*$-algebras and their automorphism groups.
\newblock Academic Press, London, 1979.

\bibitem{R09}
L.~Robert.
\newblock On the Comparison of Positive Elements of a $\mathrm{C}^*$-algebra by lower-semicontinuous Traces.
\newblock {\em Indiana Univ. Math. J.}, 58(6):2509--2515, 2009.

\bibitem{R12}
L.~Robert.
\newblock Classification of inductive limits of 1-dimensional {NCCW} complexes.
\newblock {\em Adv. Math.}, 231(5):2802--2836, 2012.

\bibitem{R13}
L.~Robert.
\newblock The cone of functionals on the {C}untz semigroup.
\newblock {\em Math. Scand.}, 113(2):161--186, 2013.


\bibitem{RS10}
L.~Robert and L.~Santiago.
\newblock Classification of {$C\sp \ast$}-homomorphisms from {$C\sb 0(0,1]$} to
  a {$\mathrm{C}^*$}-algebra.
\newblock {\em J. Funct. Anal.}, 258(3):869--892, 2010.

\bibitem{S12}
L. Santiago.
\newblock Reduction of the dimension of nuclear $\Ca$-algebras.
\newblock Preprint. arXiv:1211.7159, 2012.


\bibitem{TT15}
A. Tikuisis and A. Toms.
\newblock On the structure of Cuntz semigroups in (possibly) nonunital $\mathrm{C}^*$-algebras.
\newblock {\em Canad. Math. Bull.} 58, 402--414, 2015.

\bibitem{T17} H.~Thiel.
\newblock The Cuntz Semigroup.
\newblock Lecture notes from a course at the University of M\"unster, winter semester 2016-17. Available at: \url{https://ivv5hpp.uni-muenster.de/u/h_thie08/teaching/CuScript.pdf}.


\bibitem{W18} W. Winter.
\newblock Structure of nuclear $\Ca$-algebras: from quasidiagonality to classification and back again. 
\newblock {\em Proceedings of the International Congress of Mathematicians-Rio de Janeiro}, Vol. III. Invited lectures, pp 1801-1823, 2018. 
\end{thebibliography}
\end{document}